\DeclareMathOperator{\Lk}{Lk}
\newcommand{\id}{{{\rm id}}}
     \newcommand{\BN}{{\mathbb {N}}}
    \newcommand{\BQ}{{\mathbb {Q}}}
     \newcommand{\BZ}{{\mathbb {Z}}}
     \newcommand{\Aut}{{\mathrm{Aut}}}
    \newcommand{\Hom}{{\mathrm{Hom}}} 
    \newcommand{\Ind}{{\mathrm{Ind}}}
    \newcommand{\im}{{\mathrm{im}}}
\def\-{^{-1}}
\newcommand{\delete}[1]{}
    \theoremstyle{plain}
\newtheorem{thm}{Theorem}[section]
\newtheorem{defn}[thm]{Definition} 
\newtheorem{lem}[thm]{Lemma}
\newtheorem{prop}[thm]{Proposition}
\newtheorem{cor}[thm]{Corollary}
\newtheorem{rem}[thm]{Remark}
\newtheorem*{thmA}{Theorem A}
\newtheorem*{thmB}{Theorem B}
\newtheorem*{rem*}{Remark}
    \numberwithin{equation}{section}
\DeclareMathOperator{\Fix}{Fix}
\DeclareMathOperator{\wCM}{wCM}
\DeclareMathOperator{\trunc}{trunc}
\providecommand{\FI}{\ensuremath\mathsf{FI}}
\providecommand{\clst}[1]{\ensuremath\overline{\mathrm{St}}(#1)}
\providecommand{\GH}{\ensuremath\mathcal G_{\mathcal H}}
\providecommand{\CH}{\ensuremath\mathcal C_{\mathcal H}}
\newenvironment{inlinecond}[1]{%
  \begin{enumerate}%
  \item%
}{%
  \end{enumerate}%
}
\newenvironment{cond}[1]{%
  \vspace{1.2ex}\begin{enumerate}%
  \item%
}{%
  \end{enumerate}\vspace{1.2ex}%
}
\begin{document}

\title{Stability results for Houghton groups}

\author{Peter Patzt}
\address{Institut f\"ur Mathematik, Freie Universit\"at Berlin, Germany}
\email{peter.patzt@fu-berlin.de}

\author{Xiaolei Wu}
\address{Institut f\"ur Mathematik, Freie Universit\"at Berlin, Germany}
\curraddr{Max Planck Institute for Mathematics, Bonn, Germany}
\email{hsiaolei.wu@mpim-bonn.mpg.de}


\date{February, 2016}

\keywords{homology stability; representation stability; Houghton groups.}

\begin{abstract}
We
prove homological stability for a twisted version of the Houghton groups and their multidimensional analogues. Based on this, we can describe the homology of the Houghton groups and that of their multidimensional analogues over constant noetherian coefficients as an essentially finitely generated $\FI$-module. 
\end{abstract}

\maketitle

\section*{Introduction}
The Houghton groups were first introduced in \cite{H} by Houghton. In
\cite{B}, K. Brown proved that Houghton's group $\mathcal{H}_n$ is of
type $FP_{n-1}$ but not $FP_n$.  The group $\mathcal{H}_n$ can be
defined as follows (cf. \cite[Section 5]{B}).

Let $\BN$ be the set of positive integers, and $[n] = \{1,2, \dots, n\}$. Let $\mathcal{H}_n$ be the group of all permutations (self bijections) $g$ of $\BN \times [n]$ such that on each copy of $\BN$, $g$ is eventually a translation. More precisely, we require the following condition.

\begin{cond}{($\ast$)}\label{cond:Houghton}
\textit{There is an $n$-tuple $(d_1,d_2, \dots,d_n ) \in \BZ^n$ such
that for each $i \in [n]$ one has $g(x,i)=(x+d_i,i)$ for sufficiently
large $x\in \BN$.}
\end{cond}

We also define a twisted version $\tilde{\mathcal{H}}_n$  of
$\mathcal{H}_n$ as follows. An element $g \in \tilde{\mathcal{H}}_n$ is
a permutation of $\BN\times [n]$ such that the following condition it true.

\begin{cond}{($\tilde\ast$)}\label{cond:twisted Houghton}
\textit{There is an $n$-tuple $(d_1,d_2, \dots,d_n ) \in \BZ^n$ and
$\sigma \in \mathfrak S_n$ such that for each $i \in [n]$ one has
$g(x,i)=(x+d_{i},\sigma(i))$ for sufficiently large $x\in \BN$.}
\end{cond}

We can embed the symmetric group $\mathfrak S_n$ into the set of permutations
of $\BN\times [n]$ by only acting on $[n]$. That is to say, an element
of $\sigma \in \mathfrak S_n$ acts on $(x,i) \in  \BN \times [n] $ by $\sigma
(x,i) = (x,\sigma(i))$. Then $\tilde{\mathcal{H}}_n$ is generated by
$\mathcal{H}_n$ and $\mathfrak S_n$. In fact, $\mathcal{H}_n$ is a normal
subgroup of $\tilde{\mathcal{H}}_n$ and  $\tilde{\mathcal{H}}_n  \cong
{\mathcal{H}}_n \rtimes \mathfrak S_n$. Since $\mathcal{H}_n$  is a finite
index subgroup of $\tilde{\mathcal{H}}_n$, the twisted Houghton group
$\tilde{\mathcal{H}}_n$ has the same finiteness properties as
$\mathcal{H}_n$ (cf. \cite[Chapter VIII, Proposition 5.1]{B1}).

The inclusion map $\BN \times [n] \subset \BN \times [n+1]$ induces a
map from $\tilde{\mathcal{H}}_n$ to $\tilde{\mathcal{H}}_{n+1}$. We will
prove that these groups satisfy homological stability. Indeed we want to
prove this for a multidimensional version of the Houghton groups. These
groups were defined by Bieri and Sach recently in \cite{BH}, where they
proved the multidimensional version has similar finiteness properties as
the
original Houghton groups.

Let us first define $\tilde{\mathcal{H}}_{k,n}$, the $k$-dimensional
version of the twisted Houghton group. This shall be a subgroup of
permutations on $\BN^k \times [n]$. We will call a subset $X\subset \BN^k$
an
\emph{$r$-dimensional ray} if there is a point $x\in \BN^k$ and an
$r$-subset $T\subset [k]$ such that
\[ X = \{ y \in \BN^k \mid \begin{array}{l} \forall j \in T\colon y_j
\ge x_j \\ \forall j \not \in T \colon  y_j = x_j \end{array}\}.\]
Let $\tilde{\mathcal H}_{k,n}$ be the group of all permutations that are
translations on all rays of a finite partition of $\BN^k \times [n]$ into
rays. By a translation we mean a map $f\colon X\to \BN^k \times [n]$ given
by $f(x,i) = (x+d, i + d_0)$ for $d\in \BZ^k$ and $d_0\in \BZ$.  Note that $0$-dimensional rays are just points. 
In Bieri and Sach's notion, this is the group $Pet(\BN^k \times [n])$  where $\BN^k \times [n] \subset \BZ^{k+1}$, and the structure group  $\BZ^{k+1}$  acts on the lattice $\BZ^{k+1}$ as translations.


Every finite partition of $\BN^k\times[n]$ must contain exactly one
$k$-dimensional ray for every copy of $\BN^k$. Therefore there is a
surjection $\tilde{\mathcal{H}}_{k,n} \to \mathfrak S_n$. Define
${\mathcal{H}}_{k,n}$ as the kernel of that map. Note that this short
exact sequence splits. Again from the inclusion of $\BN^k \times [n]
\subset \BN^k \times [n+1]$ we get inclusion maps
$\tilde{\mathcal{H}}_{k,n} \to \tilde{\mathcal{H}}_{k,n+1}$ and
${\mathcal{H}}_{k,n} \to {\mathcal{H}}_{k,n+1}$. Note that
$\tilde{\mathcal{H}}_{1,n} = \tilde{\mathcal{H}}_n$ and
${\mathcal{H}}_{1,n} = \mathcal H_n$.

We now formulate our homological stability result for the twisted
Houghton groups.

\begin{thmA}
The induced map
\[H_i(\tilde{\mathcal{H}}_{k,n}; \BZ )  \longrightarrow H_i(\tilde{\mathcal{H}}_{k,n+1}; \BZ )\]
is surjective if  $i \leq \frac{n-1}{2}$ and injective if  $i \leq \frac{n-2}{2}$.
\end{thmA}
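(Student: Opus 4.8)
The plan is to follow the standard machinery for proving homological stability of a family of groups $G_n$ acting on a highly-connected simplicial (or semi-simplicial) complex $X_n$ with stabilizers related to $G_{n-1}$ (or more precisely, to the groups appearing earlier in the family). First I would construct, for each pair $(k,n)$, a semi-simplicial set $W_{k,n}$ on which $\tilde{\mathcal H}_{k,n}$ acts. The natural candidate: a $p$-simplex is an ordered $(p+1)$-tuple of ``disjoint rays at infinity'' in $\BN^k\times[n]$, i.e. disjoint $k$-dimensional rays (one from each of $p+1$ distinct copies of $\BN^k$), or, in a more refined model, a tuple of germs of translations sending new standard copies of $\BN^k$ into $\BN^k\times[n]$. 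The point of such a complex is twofold: the stabilizer of a $p$-simplex should be (up to finite-index or up to a well-understood extension) isomorphic to $\tilde{\mathcal H}_{k,n-p-1}$, and the action on $p$-simplices should be transitive (or have finitely many orbits that can be controlled). I would set things up so that the link / quotient structure mirrors the inclusion $\BN^k\times[n-p-1]\subset\BN^k\times[n]$.

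The next step is the connectivity estimate: I must show $W_{k,n}$ is $\lfloor\tfrac{n-2}{2}\rfloor$-connected, or more precisely that its connectivity grows linearly in $n$ with slope $1/2$ — this is exactly what produces the ranges $i\le\frac{n-1}{2}$ and $i\le\frac{n-2}{2}$ in the statement. I expect this to be the main obstacle. The complex of disjoint rays at infinity has a combinatorial flavor similar to the complex of ``split injective words'' or to the arc complexes / disjointness complexes that appear in the work of Hatcher–Wahl, Randal-Williams–Wahl, and in Wahl's surveys; a promising route is to identify $W_{k,n}$ with (or compare it to) a known highly-connected complex — e.g. a complex built from injections $[p+1]\hookrightarrow[n]$ decorated by translation data — and then invoke a nerve-type or Quillen-style poset argument, possibly bad-simplex/coloring arguments à la Hatcher–Wahl, to bootstrap connectivity. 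Because the translation data (the $\BZ^{k+1}$-worth of choices of germ) is contractible as a poset, one should be able to reduce the connectivity question to that of a purely set-theoretic complex on $[n]$, for which the slope-$1/2$ bound is classical.

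With the complex and its connectivity in hand, the homological stability follows from the standard spectral sequence argument. Concretely: the action of $\tilde{\mathcal H}_{k,n}$ on $W_{k,n}$ gives a spectral sequence with $E^1_{p,q}=\bigoplus_{\sigma} H_q(\mathrm{Stab}(\sigma_p);\BZ)$ converging to $0$ in the stable range (by high connectivity of $W_{k,n}$, its $\BZ$-homology and equivariant homology vanish through degree $\approx n/2$). Identifying each stabilizer with $\tilde{\mathcal H}_{k,n-p-1}$ (up to controlled issues — stabilizers of oriented simplices may only contain $\tilde{\mathcal H}_{k,n-p-1}$ with finite, or trivial, index, but since we work with $\BZ$-coefficients and these are the relevant subgroups, this is handled as in the classical arguments), the $d^1$ differential $E^1_{p,q}\to E^1_{p-1,q}$ becomes an alternating sum of stabilization maps $H_q(\tilde{\mathcal H}_{k,n-p-1})\to H_q(\tilde{\mathcal H}_{k,n-p})$; a shifting/telescoping argument on this complex then shows inductively (on $i$, or on $n$) that the bottom map $H_i(\tilde{\mathcal H}_{k,n})\to H_i(\tilde{\mathcal H}_{k,n+1})$ is surjective in the asserted range, and injective one degree lower, provided one also checks the rows are exact far enough — which is where the precise slope $\tfrac12$ connectivity is consumed. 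The only genuinely new input beyond assembling these standard pieces is constructing the right $W_{k,n}$ in the multidimensional ($k>1$) setting and pinning down its connectivity; everything downstream is the usual bookkeeping.
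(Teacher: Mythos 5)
Your overall strategy is the same as the paper's: build a ``disjoint rays at infinity'' complex on which $\tilde{\mathcal H}_{k,n}$ acts, prove its connectivity grows linearly in $n$, and feed this into the standard stability machine (the paper does not run the spectral sequence by hand but constructs a symmetric homogeneous category $\CH$ with $\Aut(n)=\tilde{\mathcal H}_{k,n}$ and quotes Randal-Williams--Wahl Theorem 3.1; this also settles your worry about stabilizers, since by homogeneity the relevant stabilizers are exactly the $\tilde{\mathcal H}_{k,n-p-1}$, not just up to finite index). So the downstream bookkeeping is fine. The genuine gap is in the step you yourself flag as the main obstacle: the connectivity bound is only asserted via the heuristic that ``the translation data is contractible, so reduce to a set-theoretic complex on $[n]$,'' and that reduction does not work verbatim. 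In the correct model the vertices are injections $\BN^k\to\BN^k\times[n]$ that are translations on each ray of a finite ray partition; besides the unique $k$-dimensional ray, such an injection has finitely many lower-dimensional rays whose images may land anywhere in $\BN^k\times[n]$. Consequently two vertices whose $k$-dimensional rays go to \emph{different} copies of $\BN^k$ need not have disjoint images, so the projection $S_n\to\Delta^{n-1}$ recording the target copy of the $k$-dimensional ray is \emph{not} a complete join complex in the sense of Hatcher--Wahl, and one cannot simply quote their Proposition 3.5 or a naive ``contractible fiber'' argument.

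The paper's actual contribution at this point is a generalization that repairs exactly this: a notion of \emph{fin-retraction}, where one only demands a section of $\pi$ relative to each \emph{finite} set $S$ of vertices (constructed by translating the chosen ray images far enough out to avoid the finitely many obstructions coming from $S$), together with a bad-simplex argument generalizing \cite[3.5]{HW} showing that a fin-retraction over a weakly Cohen--Macaulay base inherits its connectivity. Restricted to the $(n-2)$-skeleton of $\Delta^{n-1}$ this yields that $S_n$ is $\frac{n-3}{2}$-connected, which is the input LH3 with slope $2$ giving the ranges in Theorem A. Your proposal would be complete once you either prove such a relative-section/bad-simplex lemma yourself or otherwise justify the reduction; as written, the connectivity claim rests on an identification (decorated injections with contractible decoration, i.e.\ a join-type structure) that is false for this complex. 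A secondary caveat: your alternative ``germs of translations'' model would change the simplex stabilizers (the stabilizer of a germ is strictly larger than $\tilde{\mathcal H}_{k,n-p-1}$), so the full-injection model is the one to use.
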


\begin{rem*} Here we restrict our main result to the constant
coefficient $\BZ$ case. Nevertheless, the theorem also holds for some
general coefficients  by applying Theorem A in \cite{RWW}. \end{rem*}

\begin{rem*}  In \cite{BCMR}, it was showed that $Aut ({\mathcal{H}}_n)$
is isomorphic to $\tilde{\mathcal{H}}_n$. Hence Theorem A can also be
understood as homology stability phenomenon for  $Aut ({\mathcal{H}}_n)$.
\end{rem*}

We have a natural action of the symmetric groups on the homology groups
of the Houghton groups. Therefore we should not expect homological
stability. Using a result of Putman and Sam \cite{PS} we can prove a
representation stability result though. There have been different
notions of representation stability. One that seems to imply most of
them in different contexts is the structure of a finitely generated
$\FI$-module.

\begin{thmB}
Let $R$ a commutative noetherian  ring. Then for every $i,k\in \BN$
there is an $\FI$-module $V$, given by $V_n = H_i(\mathcal H_{k,n}; R)$,
which is essentially finitely generated, i.e. there is a finitely
generated $
FI$-module $W$ and a map $W\to V$ such that $W_n\to V_n$ is surjective
for all large enough $n\in \BN$.
\end{thmB}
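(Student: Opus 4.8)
I would first record that $n\mapsto H_i(\mathcal H_{k,n};R)$ is an $\FI$-module. A morphism $[m]\hookrightarrow[n]$ in $\FI$ induces an inclusion of sets $\BN^k\times[m]\hookrightarrow\BN^k\times[n]$ and hence an injective homomorphism $\mathcal H_{k,m}\hookrightarrow\mathcal H_{k,n}$ (act by $g$ on the transported copy, by the identity on the complement), hence a map on $H_i(-;R)$; this is functorial, and the induced action of $\Aut_{\FI}([m])=\mathfrak S_m$ coincides with the conjugation action of the copy of $\mathfrak S_m$ sitting inside $\tilde{\mathcal H}_{k,m}$. Thus for every $q$ the assignment $V^{(q)}_n:=H_q(\mathcal H_{k,n};R)$ defines an $\FI$-module $V^{(q)}$, and $V:=V^{(i)}$ is the one in the statement. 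Moreover $\mathcal H_{k,n}$ has enough finiteness (by \cite{B,BH}, it is of type $FP_{n-1}$) that $V_n$ is a finitely generated $R$-module for all $n$ large relative to $i$. Hence it suffices to show that $V$ agrees, in all sufficiently large degrees, with a finitely generated $\FI$-module; discarding the finitely many small degrees then produces the required $W\to V$.

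The engine is the comparison between $\mathcal H_{k,n}$ and $\tilde{\mathcal H}_{k,n}=\mathcal H_{k,n}\rtimes\mathfrak S_n$. On classifying spaces one has $B\tilde{\mathcal H}_{k,n}\simeq(B\mathcal H_{k,n})_{h\mathfrak S_n}$, producing a Lyndon--Hochschild--Serre spectral sequence $H_p(\mathfrak S_n;V^{(q)}_n)\Rightarrow H_{p+q}(\tilde{\mathcal H}_{k,n};R)$ that is natural in $\FI$ and whose abutment is controlled by Theorem A (the structure maps are isomorphisms in a range growing linearly in $n$). To convert this into information about $V$ I would use the highly connected complex produced, in the style of \cite{RWW}, in the course of proving Theorem A: a simplicial complex $X_n$ with a $\tilde{\mathcal H}_{k,n}$-action whose simplex stabilizers are, up to conjugacy, smaller twisted Houghton groups. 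Restricting the action to $\mathcal H_{k,n}$ and keeping track of the $\mathfrak S_n=\tilde{\mathcal H}_{k,n}/\mathcal H_{k,n}$-symmetry exhibits $X_n$ as a model for the central stability complex (in the sense of Putman) of the sequence $\mathcal H_{k,\bullet}$. Feeding its connectivity, the above spectral sequence, and Theorem A into an induction on $q$ shows that each $V^{(q)}$ is centrally stable through a range of degrees linear in $n$. By the result of Putman and Sam \cite{PS}, together with the noetherianity of the category of $\FI$-modules over $R$, central stability forces $V$ to agree in all large degrees with a finitely generated $\FI$-module, which completes the argument.

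The step I expect to be hardest is this last one. One must check that the complex built to prove Theorem A is connected enough for central stability --- this is a slightly stronger requirement than what homological stability alone needs, so the connectivity bound has to be established on the nose --- and that its $\FI$-functoriality, its $\mathfrak S_n$-actions, and the identification of its simplex stabilizers are all simultaneously compatible in the way the central-stability framework requires. Finally, the range restriction inherited from Theorem A and the possible failure of $H_i(\mathcal H_{k,n};R)$ to be finitely generated over $R$ for the finitely many small $n$ are exactly what forces the conclusion to be ``essentially finitely generated'' rather than finitely generated.
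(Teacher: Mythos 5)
Your ingredients are the right ones (the $\FI$-module structure on $H_i(\mathcal H_{k,n};R)$, the $FP_{n-1}$ property to get $R$-finite generation in large ranks, noetherianity of $\FI$-modules, the highly connected complex from Section \ref{section:hom stab}, and Putman--Sam), but the engine you propose does not work as described. From the Lyndon--Hochschild--Serre spectral sequence $H_p(\mathfrak S_n; H_q(\mathcal H_{k,n};R)) \Rightarrow H_{p+q}(\tilde{\mathcal H}_{k,n};R)$ together with Theorem A you only control the abutment, and there is no way to run the sequence backwards to recover the $E_2$-page, let alone the $\mathfrak S_n$-representations $H_q(\mathcal H_{k,n};R)$ or their central stability. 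Already over $R=\BQ$ the sequence collapses to $H_*(\tilde{\mathcal H}_{k,n};\BQ)\cong \left(H_*(\mathcal H_{k,n};\BQ)\right)_{\mathfrak S_n}$, and stability of coinvariants (which is all Theorem A can see) is far weaker than central stability or essential finite generation of the $\FI$-module; over a general noetherian $R$ the higher rows make matters worse. So the sentence ``feeding its connectivity, the above spectral sequence, and Theorem A into an induction on $q$ shows that each $V^{(q)}$ is centrally stable'' is exactly the missing proof, and the proposed mechanism would fail. In fact Theorem A is not an input to Theorem B at all; both theorems rest on the connectivity statement of Corollary \ref{cor:H3}.

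The route that does work, and the one the paper takes, stays entirely inside the Putman--Sam framework: one verifies that $\CH$ and $\FI$ are complemented categories with generators, that the functor $\Psi\colon \CH\to\FI$ recording where the $k$-dimensional rays are sent is highly surjective, and that $\mathcal H_i(\Psi;R)$ is the $\FI$-module $V$. Then a truncated version of their Theorem 5.13 (Theorem \ref{thm:5.13}) applies, whose hypotheses are precisely: noetherianity of $\FI$-modules over $R$ (Church--Ellenberg--Farb--Nagpal), finite generation of $H_i(\mathcal H_{k,n};R)$ as an $R$-module for large $n$ (from $FP_{n-1}$), and high acyclicity of the semisimplicial sets $\mathfrak I_{X^n}\cong W_n(0,1)$, which is Corollary \ref{cor:H3} combined with Remark \ref{rem:semisimplicial}. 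The spectral sequence that actually produces central stability in that machinery is the equivariant one for the action of $\tilde{\mathcal H}_{k,n}$ on this highly acyclic complex, whose orbits and stabilizers yield the induced modules $(\Sigma_p V)$, not the LHS sequence of the extension $1\to\mathcal H_{k,n}\to\tilde{\mathcal H}_{k,n}\to\mathfrak S_n\to 1$. Finally, because the hypotheses only hold in large ranks, Putman--Sam's Lemma 3.6 and Theorem 3.7 must be reproved in truncated form (Lemma \ref{lem:3.6} and Theorem \ref{thm:3.7}); you correctly anticipate that this truncation is the source of the word ``essentially,'' but your proposal offers no argument in its place.
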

\begin{rem*}
Setting $R= \BQ$ this immediately gives uniform representation stability
by a theorem (cf.\ \cite[1.13]{CEF}) of Church, Ellenberg, and Farb.
Only for dimension $k=1$ one can see this directly,
  using the short exact sequence $1 \rightarrow \mathfrak S_{\infty}
\rightarrow \mathcal H_n \rightarrow \BZ^{n-1} \rightarrow 1 $ and
$H_i(\mathfrak S_{\infty} ;\BQ) \cong \{0\}$.
\end{rem*}

The paper is organised as follows. In Section \ref{section:hom stab} we
prove Theorem A and in Section \ref{section:rep stab} we prove Theorem
B. In more detail in the first section we recall definitions and results
from the categorical framework for homological stability in \cite{RWW}.
We prove Theorem A by constructing a homogeneous category for the
twisted Houghton group $\tilde{\mathcal{H}}_n$ and proving its
associated simplicial complex is highly connected by applying a
generalization of a proposition of Hatcher and Wahl (cf.\ \cite[Prop.\
3.5]{HW}). In the second section we quickly give the necessary
background for a result of Putman and Sam (cf.\ \cite[Thm.\ 5.13]{PS}).
We then modify this result slightly to  conclude  Theorem B.

\textbf{Acknowledgements.} The first author was supported by the Berlin Mathematical School. The second author was supported by the Point fellowship from the Dahlem Research School. The authors also want to thank Elmar Vogt and Nathalie Wahl for helpful discussions. The proof Corollary \ref{cor:H3} was considerably shortened after Nathalie Wahl pointed out the similarities to complete join complexes.

\section{Homology stability}\label{section:hom stab}

We begin with a summary of the axiomatized approach to homological stability given by Randal-Williams and Wahl. The definitions and results concerning homogeneous categories are taken from \cite{RWW}.  The reader is encouraged to consult the cited paper for more details.

\begin{defn}[{\cite[1.2]{RWW}}]
Let a monodial category $(\mathcal{C}, \oplus, 0)$ be called \emph{homogeneous} if $0$ is initial in $\mathcal{C}$ and if the following two properties hold.
\begin{inlinecond}{\textbf{H1}} $\Hom(A,B)$ is a transitive $\Aut(B)$-set under composition.\end{inlinecond}
\begin{inlinecond}{\textbf{H2}} The map $\Aut(A) \rightarrow \Aut(A \oplus B)$ taking $f$ to $f \oplus \id_B$ is injective with image 
\[\Fix(B) := \{ \phi\in \Aut(A\oplus B) \mid \phi\circ (\imath_A\oplus \id_B) = \imath_A \oplus \id_B\}\]
where $\imath_A\colon 0 \to A$ is the unique map.\end{inlinecond}
\end{defn}


For a homological stability result on a sequence of automorphism groups of a homogeneous category, the connectivity of a certain simplicial complex that we define next is needed.

\begin{defn}[{\cite[2.8+2.2]{RWW}}] \label{defn:Sn(X,A)}
Let $A,X$ be objects of a homogeneous category $(\mathcal{C}, \oplus,0)$. For $n \geq 1$, let $S_n(A,X)$ denote the simplicial complex whose vertices are the maps $f\colon X \to A\oplus X^{\oplus n}$ and whose $p$-simplices are $(p+1)$-sets $\{ f_0, \dots, f_p\}$ such that there exists a morphism $f\colon X^{\oplus p+1} \to A\oplus X^{\oplus n}$ with $f \circ i_j = f_j$ for some order on the set, where
\[ i_j =  \imath_{X^{\oplus j}} \oplus \id_X \oplus \imath_{X^{\oplus p-j}}  \colon X = 0 \oplus X \oplus 0 \longrightarrow X^{\oplus p+1}. \]

Also define the following property for a fixed pair $(A,X)$ and a \emph{slope} $k\ge2$.
\begin{inlinecond}{\textbf{LH3}}\label{cond:H3}
\textit{For all $n \geq 1$, $S_n(A,X)$ is $(\frac{n-2}{k})$-connected.}\end{inlinecond}
\end{defn}

\begin{defn}[{\cite[2.5]{RWW}}]\label{defn:standard}
Let $A,X$ be objects of a homogeneous category $(\mathcal{C}, \oplus,0)$. $\mathcal C$ is called \emph{locally standard} at $(A,X)$ if it satisfies the following two conditions.
\begin{inlinecond}{\textbf{LS1}}\label{cond:LS1}
\textit{The morphisms $\imath_A \oplus \id_X \oplus \imath_X$ and $\imath_{A\oplus X} \oplus \id_X$ are distinct in $\Hom(X,A\oplus X^{\oplus2})$.}\end{inlinecond}
\begin{inlinecond}{\textbf{LS2}}\label{cond:LS2}
\textit{For all $n\ge 1$, the map $\Hom(X,A \oplus X^{\oplus n-1}) \to \Hom(X, A \oplus X^{\oplus n})$ taking $f$ to $f \oplus \imath_X$ is injective.}\end{inlinecond}
\end{defn}

\begin{rem}\label{rem:semisimplicial}
In \cite[2.2]{RWW} actually a different semisimplicial complex is used to define \ref{cond:H3}, but our \ref{cond:H3} implies theirs if $\mathcal C$ is symmetric and locally standard at $(A,X)$. This is shown in \cite[2.9+2.10]{RWW}.
\end{rem}

We are now ready to quote the theorem we will use.
\begin{thm}[{\cite[3.1]{RWW}}] \label{thm:hom stab}
Let $(\mathcal{C}, \oplus,0)$ be a symmetric homogeneous category satisfying \ref{cond:H3}, \ref{cond:LS1}, and \ref{cond:LS2} for a pair $(A,X)$ with slope $k \geq 2$. Then the map
\[H_i(\Aut(A\oplus X^{\oplus n}); \BZ )  \longrightarrow H_i(\Aut(A\oplus X^{\oplus n+1}); \BZ )\]
induced by the natural inclusion map is surjective if  $i \leq \frac{n}{k}$, and injective if  $i \leq \frac{n-1}{k}$.
\end{thm}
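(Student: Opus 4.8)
The plan is to run the standard spectral-sequence argument for homological stability, in the axiomatic form of \cite{RWW}, by induction on $n$. Write $G_n:=\Aut(A\oplus X^{\oplus n})$; the goal is to show that if for every $m<n$ the stabilization $\mathrm{stab}\colon H_i(G_m;\BZ)\to H_i(G_{m+1};\BZ)$ is surjective for $i\le m/k$ and injective for $i\le (m-1)/k$, then the same holds for $m=n$. The geometric input is the action of $G_n$ on the complex $S_n(A,X)$ of Definition~\ref{defn:Sn(X,A)} --- or rather on the semisimplicial set $W_n(A,X)$ whose $p$-simplices are the \emph{ordered} spanning tuples $(f_0,\dots,f_p)$, which by Remark~\ref{rem:semisimplicial} (using that $\mathcal C$ is symmetric and locally standard at $(A,X)$) is $\tfrac{n-2}{k}$-connected because $S_n(A,X)$ is, by \ref{cond:H3}.

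First I would unwind the orbit--stabilizer data. Axiom \textbf{H1} makes $\Hom(X^{\oplus p+1},A\oplus X^{\oplus n})$ a transitive $G_n$-set, so $G_n$ permutes the $p$-simplices of $W_n$ transitively; applying \textbf{H2} repeatedly to the standard $p$-simplex $(i_0,\dots,i_p)$ identifies its stabilizer with $G_{n-p-1}$ and identifies each face inclusion $G_{n-p-1}\hookrightarrow G_{n-p}$ with the stabilization map up to an inner automorphism of $G_{n-p}$ coming from the braiding. Two consequences feed the spectral sequence: all the face maps induce the \emph{same} homomorphism $H_q(G_{n-p-1})\to H_q(G_{n-p})$, namely $\mathrm{stab}$ (inner automorphisms act trivially on homology), and the coefficient systems arising are untwisted. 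This is exactly the point at which ``symmetric'' and the conditions \ref{cond:LS1}, \ref{cond:LS2} are used, and the reason one must argue with $W_n$ rather than $S_n$.

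Now consider the spectral sequence of the $G_n$-action on the augmented chain complex of $W_n$: its $E^1$-page has $E^1_{0,q}=H_q(G_n;\BZ)$ and $E^1_{p,q}\cong H_q(G_{n-p};\BZ)$ for $p\ge1$, the differential $d^1\colon E^1_{p,q}\to E^1_{p-1,q}$ is the alternating sum of face maps, hence equals $\mathrm{stab}$ for $p$ odd and $0$ for $p$ even (and $\mathrm{stab}$ out of the $p=1$ column), and connectivity of $W_n$ makes $E^\infty_{p,q}$ vanish for $p+q$ up to roughly $\tfrac{n-2}{k}+1$. The inductive hypothesis says the rows of the $E^1$-page, which read
\[ \cdots \xrightarrow{\ 0\ } H_q(G_{n-3}) \xrightarrow{\ \mathrm{stab}\ } H_q(G_{n-2}) \xrightarrow{\ 0\ } H_q(G_{n-1}) \xrightarrow{\ \mathrm{stab}\ } H_q(G_n), \]
are exact at every spot with $p\ge1$ in the relevant range, so $E^2_{p,q}=E^\infty_{p,q}=0$ there for $p\ge1$. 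Playing the vanishing of $E^\infty_{0,q}$ against the surviving higher differentials $d^r\colon E^r_{r,q-r+1}\to E^r_{0,q}$ --- a bookkeeping carried out most cleanly by running the same argument with the relative groups $H_*(G_n,G_{n-1})$, which the spectral sequence then shows vanish in the asserted range --- yields that $\mathrm{stab}\colon H_q(G_{n-1})\to H_q(G_n)$ is surjective for $q\le n/k$ and injective for $q\le (n-1)/k$, completing the induction.

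The step I expect to be the real obstacle is not any single homological input but the numerology: one has to propagate the \emph{two} shifting ranges (surjectivity to $n/k$, injectivity to $(n-1)/k$) through the spectral sequence so that they close up under the induction and match exactly the output of ``$S_n(A,X)$ is $\tfrac{n-2}{k}$-connected'', uniformly in the slope $k\ge2$; this is the technical heart of \cite[3.1]{RWW}, and it is where the weak connectivity hypothesis is paid for. Once Theorem~\ref{thm:hom stab} is in place, Theorem~A will follow by exhibiting a symmetric homogeneous category with $\Aut(A\oplus X^{\oplus n})\cong\tilde{\mathcal H}_{k,n}$, checking \ref{cond:LS1}--\ref{cond:LS2}, and showing its associated complex satisfies \ref{cond:H3} with slope $2$ via the Hatcher--Wahl criterion \cite[Prop.~3.5]{HW} --- the content of the rest of Section~\ref{section:hom stab}.
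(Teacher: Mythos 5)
This statement is not proved in the paper at all: it is imported verbatim as \cite[Thm.~3.1]{RWW}, so the only thing to compare your argument with is the proof in that reference --- and your outline does follow the same strategy as Randal-Williams--Wahl (action of $G_n=\Aut(A\oplus X^{\oplus n})$ on the semisimplicial set $W_n(A,X)$, identification of simplex stabilizers via \textbf{H1}/\textbf{H2}, face maps inducing the stabilization up to conjugation, and the associated spectral sequence with an induction on $n$).

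However, as a proof your proposal has a genuine gap, and you name it yourself: the entire quantitative content is deferred (``this is the technical heart of \cite[3.1]{RWW}''). Concretely, two things are missing. First, the absolute row-exactness argument you sketch does not close up on its own: since the face maps all induce $\mathrm{stab}$, the differential $d^1$ alternates between $\mathrm{stab}$ and $0$, so exactness of the row at the column adjacent to $E^1_{0,q}$ amounts to an injectivity statement at level $n$ that is part of what is being proved, not part of the inductive hypothesis; this is precisely why the actual proof must be run with the relative groups $H_*(G_n,G_{n-1})$ (you mention this in passing but do not set it up, state the relative spectral sequence, or prove the vanishing statement that replaces the theorem). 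Second, even granting row exactness in a range, one must control the higher differentials $d^r\colon E^r_{r,q-r+1}\to E^r_{0,q}$, whose sources lie in columns where the inductive range has degraded by $r$ steps; verifying that the $(\tfrac{n-2}{k})$-connectivity from \ref{cond:H3} is exactly enough to kill these, for every slope $k\ge 2$, and that the output ranges $i\le \tfrac nk$ (surjectivity) and $i\le\tfrac{n-1}k$ (injectivity) reproduce the inductive hypothesis, is the actual proof, and none of it is carried out here. So the approach is the right one, but what you have written is an accurate road map to \cite[Thm.~3.1]{RWW} rather than a proof of it; within the present paper the correct treatment of this statement is what the authors do, namely cite it.
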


Finally we want to quote a construction theorem that will allow us to build a category in which $\Aut(X^{\oplus n})$ are the twisted Houghton groups.

\begin{thm}[{\cite[1.6+1.8+1.10]{RWW}}]\label{thm:hom cat construction}
Given a symmetric monoidal groupoid $(\mathcal G, \oplus, 0)$ with $\Aut(0) = \{\id\}$ and the map $\Aut(A) \to \Aut(A\oplus B)$ sending $f$ to $f\oplus\id_B$ is injective for all objects $A,B$ in $\mathcal G$.  Assume furthermore that the underlying monoid has no zero divisors and is cancellative. Then there is a  symmetric homogeneous category $\mathcal C$, which is defined on the same elements as $\mathcal G$ with homomorphism sets $\Hom_{\mathcal C}(A, B\oplus A) = \Aut(B\oplus A)/\Aut(B)$ and empty if the codomain is not isomorphic to any such sum. 
\end{thm}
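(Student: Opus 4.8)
The plan is to exhibit $\mathcal C$ concretely as Quillen's bracket construction $U\mathcal G=\langle\mathcal G,\mathcal G\rangle$ and then verify the homogeneity axioms by hand. Take the objects of $U\mathcal G$ to be those of $\mathcal G$; a morphism $A\to B$ is an equivalence class $[C,\phi]$ consisting of an object $C$ of $\mathcal G$ and an isomorphism $\phi\colon C\oplus A\xrightarrow{\ \cong\ }B$ in $\mathcal G$, where $(C,\phi)\sim(C',\phi')$ iff there is an isomorphism $g\colon C\xrightarrow{\ \cong\ }C'$ with $\phi=\phi'\circ(g\oplus\id_A)$. Composition of $[C,\phi]\colon A\to B$ with $[D,\psi]\colon B\to E$ is $[D\oplus C,\ \psi\circ(\id_D\oplus\phi)]$ (inserting the associator), the identity of $A$ is $[A,\id_A]$ up to the unitor $A\oplus 0\cong A$, and the monoidal and symmetric structures are inherited from $\mathcal G$ (the braiding $b$ of $\mathcal G$ descends to $[0,b]$).

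First I would record the two bookkeeping facts that make the description of the $\Hom$-sets precise. Since $\mathcal G$ is a groupoid, $0$ is initial in $U\mathcal G$: any $[C,\phi]\colon 0\to B$ is equivalent to $[B,\id_B]$ via $g=\phi$. Next, $\Hom_{U\mathcal G}(A,B)$ is nonempty precisely when $C\oplus A\cong B$ for some object $C$, and cancellativity of the underlying monoid forces the isomorphism class of such a $C$ to be unique. Writing $B=C\oplus A$, any morphism $A\to B$ admits a representative with that same $C$ (replace $(C',\phi')$ by $(C,\phi'\circ(g\oplus\id_A))$ for a chosen $g\colon C\xrightarrow{\cong}C'$), so it is given by some $\phi\in\Aut_{\mathcal G}(C\oplus A)$; and $\phi,\phi'$ give the same morphism iff $\phi^{-1}\phi'=\alpha\oplus\id_A$ for some $\alpha\in\Aut(C)$. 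Because $\Aut(C)\to\Aut(C\oplus A)$, $\alpha\mapsto\alpha\oplus\id_A$, is injective by hypothesis, this identifies $\Hom_{U\mathcal G}(A,C\oplus A)$ with the coset space $\Aut(C\oplus A)/\Aut(C)$, with all other $\Hom$-sets empty, as claimed.

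With this in hand, \textbf{H1} and \textbf{H2} follow formally. For \textbf{H1}, the $\Aut(B)$-action on $\Hom_{U\mathcal G}(A,B)$ is postcomposition, which under the identification above is the transitive left action of $\Aut(C\oplus A)=\Aut(B)$ on its own coset space $\Aut(C\oplus A)/\Aut(C)$. For \textbf{H2}, the map $\Aut(A)\to\Aut(A\oplus B)$, $f\mapsto f\oplus\id_B$, is injective by hypothesis; and since $\imath_A=[A,\id_A]\colon 0\to A$, one computes $\imath_A\oplus\id_B=[A,\id_{A\oplus B}]\colon B\to A\oplus B$ (up to coherence isomorphisms), so postcomposing an automorphism $\phi=[0,\phi]$ of $A\oplus B$ with it yields $[A,\phi]$, which equals $[A,\id_{A\oplus B}]$ iff $\phi=\alpha\oplus\id_B$ for some $\alpha\in\Aut(A)$ --- exactly the image of $\Aut(A)$, i.e.\ $\Fix(B)$. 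The symmetric structure is inherited, so $\mathcal C=U\mathcal G$ is a symmetric homogeneous category. (The no-zero-divisors hypothesis together with $\Aut(0)=\{\id\}$ is what singles out $0$ as the unique object admitting no nontrivial complement, making the reduced description of the $\Hom$-sets and the behaviour of the monoidal unit canonical; it is not used in \textbf{H1}/\textbf{H2} themselves.)

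The main obstacle is not conceptual but the coherence bookkeeping in $U\mathcal G$: one has to insert unitors and associators carefully enough that ``postcompose $\phi$ with $\imath_A\oplus\id_B$'' is genuinely computed by the class $[A,\phi]$ modulo the stated equivalence, and check that composition in $U\mathcal G$ is associative and unital up to that equivalence. Once this is pinned down, transitivity in \textbf{H1} and the identification of $\Fix(B)$ in \textbf{H2} are immediate. A secondary point requiring care is the independence of the coset description from the choice of complement $C$, which is precisely where cancellativity of the underlying monoid enters.
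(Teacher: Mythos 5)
Your proposal is essentially a correct reconstruction of the argument in the cited source: the paper itself gives no proof of this theorem (it is imported from Randal-Williams--Wahl, 1.6+1.8+1.10), and there $\mathcal C$ is exactly Quillen's bracket construction $U\mathcal G$, with the coset description of the $\Hom$-sets and the verification of \textbf{H1}/\textbf{H2} running just as you write them. The only point worth flagging is that the hypotheses $\Aut(0)=\{\id\}$, no zero divisors, and cancellativity are what guarantee $\Aut_{U\mathcal G}(A)=\Aut_{\mathcal G}(A)$ (so that the stated $\Hom$-set formula and the emptiness clause make sense), which you gesture at only parenthetically.
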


Let us fix a dimension $k\in \BN$. We want to prove homological stability for the $k$-dimensional twisted Houghton groups in this section. In order to apply Theorem \ref{thm:hom stab}, we need to introduce a symmetric homogeneous category that can be constructed using Theorem \ref{thm:hom cat construction}. Then it suffices to prove \ref{cond:H3}, \ref{cond:LS1}, and \ref{cond:LS2}.

Let $\GH$ be the groupoid whose objects are the nonnegative integers such that its morphisms are all automorphisms with $\Aut(0) = \{\id\}$ and $\Aut(n) = \tilde{\mathcal H}_{k,n}$ for $n\ge1$. This groupoid is symmetric monoidal with the usual addition on the integers, which has no zero divisors and is cancellative. This can be seen with the map
\[ \Aut(m) \times \Aut(n) \longrightarrow \Aut(m+n)\]
where we want $g\in\Aut(m)$ to act as usual on $\BN^k\times \{1,\dots, m\}$ and $g'\in \Aut(n)$ to act on $\BN^k\times \{m+1,\dots,m+n\}$. Since $g$ and $g'$ commute, this defines a monoidal structure on $\GH$. The map
\[ \left((x,i) \mapsto \begin{cases} (x,i+n) &\text{for }i\le m\\ (x,i-m)&\text{for }i>m\end{cases} \right)\in \Aut(m+n)\]
defines a symmetry.

Let $\CH$ be the homogeneous category constructed by Theorem \ref{thm:hom cat construction}. The next lemma will help us understand the morphism sets $\Hom_{\CH}(m,n)$ better.

\begin{lem}\label{lem:hom(m,n)}
Let $m < n$, then $\Hom_{\CH}(m,n)$ can be naturally identified with the set of injections $g$ from $\BN^k \times [m]$ to $\BN^k \times [n]$ which are translations on every ray of a finite partition of $\BN^k\times [m]$ into rays.
\end{lem}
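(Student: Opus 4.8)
The statement asserts a bijection between the abstract morphism set $\Hom_{\CH}(m,n)$, defined in Theorem \ref{thm:hom cat construction} as the quotient $\Aut(n)/\Aut(n-m) = \tilde{\mathcal H}_{k,n}/\tilde{\mathcal H}_{k,n-m}$, and the set of ``partial'' Houghton-type maps $\BN^k\times[m]\hookrightarrow\BN^k\times[n]$ that are ray-translations on a finite partition into rays. My plan is to unwind both sides explicitly and match them. First I would fix the standard inclusion $\iota_m\oplus\id \colon$ --- concretely, view $\BN^k\times[m]$ as sitting inside $\BN^k\times[n]$ via the copies indexed by the last $m$ coordinates $\{n-m+1,\dots,n\}$ (matching the symmetry/monoidal conventions set up just before the lemma), and let $H := \tilde{\mathcal H}_{k,n-m}$ be the subgroup of $\tilde{\mathcal H}_{k,n}$ acting only on $\BN^k\times[n-m]$ and fixing $\BN^k\times\{n-m+1,\dots,n\}$ pointwise. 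By construction $\Hom_{\CH}(m,n)=\tilde{\mathcal H}_{k,n}/H$ (this is exactly the $\Aut(B\oplus A)/\Aut(B)$ of the theorem with $A=m$, $B=n-m$).

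\textbf{Key steps.} Step 1: Define the map $\Phi\colon \tilde{\mathcal H}_{k,n} \to \{\text{ray-translation injections }\BN^k\times[m]\hookrightarrow\BN^k\times[n]\}$ by restricting an element $g$ to the fixed copy $\BN^k\times\{n-m+1,\dots,n\}\cong \BN^k\times[m]$. One checks the image genuinely lands in the claimed set: $g$ is a ray-translation on a finite ray-partition of all of $\BN^k\times[n]$, so its restriction to any sub-union of $k$-dimensional rays is still a ray-translation on the induced (finite) ray-partition, and it is injective since $g$ is. Step 2: Show $\Phi$ is constant on left cosets of $H$: if $h\in H$ then $h$ fixes $\BN^k\times\{n-m+1,\dots,n\}$ pointwise, so $g\circ h$ and $g$ agree on that copy, giving a well-defined $\bar\Phi\colon \tilde{\mathcal H}_{k,n}/H \to \{\dots\}$. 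Step 3 (injectivity of $\bar\Phi$): if $g_1,g_2$ restrict to the same injection on $\BN^k\times\{n-m+1,\dots,n\}$, then $g_2^{-1}g_1$ fixes that copy pointwise and still lies in $\tilde{\mathcal H}_{k,n}$, hence lies in $H$, so $g_1H=g_2H$. Step 4 (surjectivity of $\bar\Phi$): given a ray-translation injection $\phi\colon\BN^k\times[m]\hookrightarrow\BN^k\times[n]$, I need to extend it to a genuine element of $\tilde{\mathcal H}_{k,n}$. The image $\phi(\BN^k\times[m])$ is a union of $m$ rays of some finite ray-partition $P$ of $\BN^k\times[n]$; its complement is a finite union of rays that, after discarding the ``used up'' finite pieces, contains cofinitely much of exactly $n-m$ copies of $\BN^k$. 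Since $\phi$ shifts each input copy into some output copy by an integer translation eventually, a counting/bijection argument on the finitely many leftover lattice points plus a choice of ray-translation bijection from the remaining $n-m$ copies onto the complement produces $g\in\tilde{\mathcal H}_{k,n}$ with $g$ restricting to $\phi$. This is where the cancellative/no-zero-divisor hypotheses and the fact that a finite ray-partition always has exactly one $k$-dimensional ray per copy of $\BN^k$ (noted in the introduction) do the real work.

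\textbf{Main obstacle.} The surjectivity step (Step 4) is the crux: one must argue that \emph{any} partial ray-translation injection extends to a full one. The subtlety is bookkeeping of the finitely many ``junk'' points --- after removing the image of $\phi$, the leftover region consists of $n-m$ full copies of $\BN^k$ minus (and plus) finitely many points scattered among all $n$ copies, and one must check the cardinalities of the finite discrepancies match so that a bijection exists; this is where injectivity of $\phi$ (so no collisions) and the structure of rays (a ray minus a finite set is again a finite union of rays) are used. I would phrase this cleanly by first isolating a large ``tail'' sub-ray in each of the $n-m$ unused copies on which $\phi$'s image does not meet, defining $g$ there by the obvious translations dictated by the eventual behaviour, and then observing that $g$ is forced to be a bijection on the remaining finite complement since source and target finite sets have equal size (both equal $nk$-many... rather, both have the same finite cardinality because removing cofinite pieces from $\BN^k\times[n]$ in two ways leaves equinumerous finite sets). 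Once $g$ is built, verifying $g\in\tilde{\mathcal H}_{k,n}$ and $\Phi(g)=\phi$ is immediate. Finally I would remark that the identification is natural in $m$ and $n$ with respect to the stabilization maps, which is all that is needed for the applications in the rest of the section.
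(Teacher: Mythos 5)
Your overall route is the same as the paper's: identify $\Hom_{\CH}(m,n)$ with $\tilde{\mathcal H}_{k,n}/\tilde{\mathcal H}_{k,n-m}$ via Theorem \ref{thm:hom cat construction}, and then match cosets with partial ray-translation injections using restriction in one direction and extension in the other (the paper runs the bijection from injections to cosets, you run its inverse; the coset-independence and injectivity arguments are identical). The paper dismisses the extension step with ``one easily extends it'', so your decision to spell out Step 4 is welcome --- but that is exactly where your write-up has a problem.

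The difficulty is that your bookkeeping in Step 4 is only valid for $k=1$. You claim that after removing $\im\phi$, the leftover region is ``$n-m$ full copies of $\BN^k$ minus (and plus) finitely many points'' and that one concludes by matching ``equinumerous finite sets''. For $k\ge 2$ this is false: inside a target copy that receives a translated $k$-dimensional ray, the complement of that ray is typically infinite (it is a finite union of rays of dimension $<k$, e.g.\ $\BN^2\setminus\{y\ge(2,2)\}$ contains two infinite strips), so the discrepancy between $\im\phi$ and the $n-m$ unused copies is a finite union of \emph{rays}, not a finite set of points, and no cardinality-of-finite-sets argument applies. The correct statement to prove is: the complement of $\im\phi$ is a disjoint union of finitely many rays, exactly $n-m$ of which are $k$-dimensional, and any such set is the image of a ray-translation bijection from $\BN^k\times[n-m]$; to build that bijection one must choose the partition of each source copy adaptively (peeling off lower-dimensional rays near its boundary to absorb the extra low-dimensional rays in the target), rather than sending one big $k$-dimensional source ray onto each full target copy and hoping the remainder is finite. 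With that repair your argument goes through. A minor additional point: the cancellativity/no-zero-divisor hypotheses play no role here --- they are only needed for Theorem \ref{thm:hom cat construction} to produce the homogeneous category in the first place; what does the work in the extension is purely the combinatorics of rays.
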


\begin{proof}
From $m < n$, we get $\Hom_{\CH}(m,n) = \tilde{\mathcal H}_{k,n}/\tilde{\mathcal H}_{k,n-m}$ by Theorem \ref{thm:hom cat construction}. Let us define a map $F$ from these injections  to $ \tilde{\mathcal H_{k,n}}/\tilde{\mathcal H}_{k,n-m}$.  Given any injection from $\BN^k \times [m]$ to $\BN^k \times [n]$ which are translations on every ray of a finite ray partition of $\BN^k\times [m]$, one easily extends it to a permutation of $\BN^k\times [n]$ which is a translation on every ray of a partition of $\BN^k \times [ n]$ into rays. Given two elements $g_1,g_2\in \tilde{\mathcal H}_{k,n}$ extending the same injection then they coincide on the first $m$ copies of $\BN^k$. Let $h=g_2^{-1}g_1$ which acts trivially on $\BN^k\times [m]$. Thus that $h\in \tilde{\mathcal H}_{k,n-m}$. Thus the given map  $F$ is well defined.

Given an element $g\in  \tilde{\mathcal H}_{k,n}$, we can restrict it to $\BN^k\times [m]$ to get an injection that is a translation on rays. Thus $F$ is surjective. Because every element in a coset has the same restriction the map $F$ is injective.
\end{proof}

Let us choose $X =A= 1$. From the previous lemma $\CH$ is clearly locally standard at $(A,X)$. We can apply Theorem \ref{thm:hom stab} to get Theorem A, if  the corresponding simplicial complex $S_n(1,1) \cong S_{n+1}(0,1)$ (cf. Definition \ref{defn:Sn(X,A)}) is $\frac{n-2}{2}$-connected. Let us abbreviate $S_n(0,1)$ by $S_n$.

Let us prove a few properties about simplicial complexes and come back to $S_n$ again later.

\begin{defn} \label{defn:wCM}
A simplicial complex $K$ is called \emph{weakly Cohen-Macaulay of dimension $n$} if it is $(n-1)$-connected and the link of any $p$-simplex is $(n-p-2)$-connected. In this case, we write $\wCM(K) \geq n$.
\end{defn}

\begin{rem}
Note that $(-1)$-connected is defined to say non-empty. This implies that $K$ is at least of dimension $n$.  Note also that a weakly Cohen-Macaulay complex of dimension $n$ is  weakly Cohen-Macaulay of dimension $m\le n$.
\end{rem}

\begin{defn}
Let $\pi\colon Y \to X$ be a surjective simplicial map between simplicial complexes. Let $S$ be a subset of the vertices of $Y$. We call a section $\rho\colon X \to Y$ of $\pi$ an \emph{$S$-section} if for all simplices $\sigma$ in the span of $S$ and every simplex $\tau$ in $X$ we have
\[ \tau \subset \Lk_X \pi \sigma \Longleftrightarrow \rho \tau \subset \Lk_Y \sigma.\]
We call $\pi$ a \emph{fin-retraction} if there exists an $S$-section for every finite set of vertices of $Y$. 
\end{defn}

\begin{rem}
When Hatcher and Wahl call $Y$ a complete join complex over $X$ (cf.\ \cite[3.2]{HW}), there is an $S$-section where $S$ is the set of all vertices of $Y$. In particular, then $Y\to X$ is a fin-retraction. The proof of our next proposition is a generalization of \cite[3.5]{HW}.
\end{rem}

\begin{prop}\label{prop:f-join}
If $\pi\colon Y\to X$ is a fin-retraction and $\wCM(X) \ge n$, then $Y$ is $(n-1)$-connected. If $\pi$ is simplexwise injective (ie.\ links map to links), also $\wCM(Y) \ge n$.
\end{prop}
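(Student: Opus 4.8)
The plan is to run the standard surjectivity-of-$\pi_i$ argument from Hatcher–Wahl, using the $S$-sections in place of a genuine section. To show $Y$ is $(n-1)$-connected, fix $i \le n-1$ and a map $S^i \to Y$. Since $S^i$ is compact, after simplicial approximation this factors through a finite subcomplex of $Y$, so let $S$ be the (finite) vertex set of that subcomplex. Using the hypothesis $\wCM(X) \ge n$, the composite $S^i \to Y \xrightarrow{\pi} X$ bounds a disk $D^{i+1} \to X$; moreover, because $X$ is $n$-dimensionally weakly Cohen–Macaulay, one can apply the usual Hatcher–Wahl ``link'' trick (repeatedly coning off, using that links of $p$-simplices are $(n-p-2)$-connected) to homotope the filling $D^{i+1} \to X$ so that it is ``compatible'' with the simplicial structure coming from $S$, i.e. so that over each simplex of $D^{i+1}$ the image lies in the link data that the $S$-section can see. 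Then I would apply the $S$-section $\rho\colon X \to Y$ to this filling to lift $D^{i+1}$ back up to $Y$, and the defining property of an $S$-section (the $\Longleftrightarrow$ on links, for simplices in the span of $S$ and arbitrary simplices of $X$) guarantees that the lift is a genuine simplicial map agreeing with the original $S^i \to Y$ on the boundary. This shows the map $S^i \to Y$ was nullhomotopic, hence $Y$ is $(n-1)$-connected.

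For the second statement, suppose in addition that $\pi$ is simplexwise injective. I want $\Lk_Y(\sigma)$ to be $(n-p-2)$-connected for every $p$-simplex $\sigma$ of $Y$. The key observation is that simplexwise injectivity makes $\pi$ restrict to a simplicial map $\Lk_Y(\sigma) \to \Lk_X(\pi\sigma)$, and I claim this restriction is again a fin-retraction: given a finite set $T$ of vertices of $\Lk_Y(\sigma)$, take an $S$-section $\rho$ for $Y$ with $S = T \cup (\text{vertices of }\sigma)$, and check that $\rho$ restricts to a $T$-section of $\Lk_Y(\sigma) \to \Lk_X(\pi\sigma)$ — the ``$\tau \subset \Lk \pi\tilde\sigma \Leftrightarrow \rho\tau \subset \Lk \tilde\sigma$'' condition in the link is just an instance of the same condition upstairs, since links-within-links are links. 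Since $\wCM(X) \ge n$ gives $\Lk_X(\pi\sigma)$ is $(n-p-2)$-connected, i.e.\ $\wCM(\Lk_X(\pi\sigma)) \ge n-p-1$ (using that $\pi$ simplexwise injective means $\dim \pi\sigma = p$), the first part of the proposition applied to the fin-retraction $\Lk_Y(\sigma) \to \Lk_X(\pi\sigma)$ yields that $\Lk_Y(\sigma)$ is $(n-p-2)$-connected. Combined with the $(n-1)$-connectivity of $Y$ from the first part, this is exactly $\wCM(Y) \ge n$.

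The main obstacle I anticipate is the first part: producing, from the abstract $S$-section data, an honest simplicial lift of the filling disk. The delicate point is that an $S$-section only ``controls'' link incidences for simplices in the span of $S$; so before applying $\rho$ one must genuinely homotope/subdivide the filling $D^{i+1}\to X$ into a form where every simplex that needs lifting either lies in $\pi(S)$-relevant position or can be handled by coning within a link — this is precisely the inductive compression argument of \cite[Prop.\ 3.5]{HW}, and the work is in checking that it goes through with ``complete join complex'' weakened to ``fin-retraction''. The second part is then essentially formal given the first, modulo the routine verification that $S$-sections restrict to $S$-sections on links, which is where simplexwise injectivity of $\pi$ is used.
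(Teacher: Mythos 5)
Your treatment of the second statement (restricting $\pi$ to links, checking that an $S$-section with $S \supset \mathrm{vertices}(\sigma)$ restricts to a $T$-section of $\Lk_Y(\sigma)\to\Lk_X(\pi\sigma)$, and invoking the first part) is correct and is exactly what the paper does. The problem is in your first part.

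The step ``apply the $S$-section $\rho$ to the filling $D^{i+1}\to X$ to lift it back to $Y$, and the $S$-section property guarantees the lift agrees with the original $S^i\to Y$ on the boundary'' does not work. An $S$-section is a section of $\pi$, i.e.\ $\pi\circ\rho=\id_X$; it is \emph{not} a retraction of $Y$ onto $\rho X$, and there is no requirement that $\rho(\pi(v))=v$ for $v\in S$. So if you fill $\pi\circ f$ by a disk $D$ in $X$ and push it up by $\rho$, the boundary of $\rho D$ is $\rho\circ\pi\circ f$, which in general is a completely different map from $f$: the original sphere need not meet $\rho X$ at all. The link condition in the definition of an $S$-section controls \emph{incidences} (which simplices of $\rho X$ lie in links of simplices spanned by $S$), not the composite $\rho\pi$ on $S$. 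No amount of homotoping the filling inside $X$ fixes this, because the discrepancy lives in $Y$.

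The correct argument runs in the opposite direction, and this is the actual content of the Hatcher--Wahl compression you defer to: one works entirely in $Y$ and homotopes $f$ \emph{into} the subcomplex $\rho X$. Call a simplex of $Y$ bad if it is not in $\rho X$; take a simplex $\sigma$ of $S^k$ of maximal dimension with $f(\sigma)$ bad. Maximality forces $f(\Lk_{S^k}\sigma)\subset \rho X\cap\Lk_Y f\sigma$, and the $S$-section condition identifies $\rho X\cap \Lk_Y f\sigma$ with $\Lk_X\pi f\sigma$, which is $(n-p-2)$-connected by $\wCM(X)\ge n$; so one cones off $f|_{\Lk\sigma}$ inside that link and replaces $f$ on the star of $\sigma$, strictly decreasing the number of maximal bad simplices. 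After finitely many steps $f$ lands in $\rho X\cong X$, which is $(n-1)$-connected, and one contracts there. So the disk in $X$ is used only at the very end, after the sphere has been moved into $\rho X$ --- not as the starting point of a lift.
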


\begin{proof}
We first prove that $Y$ is $(n-1)$-connected. We need to prove that every map $f\colon S^k\to Y$ with $k\le n-1$ homotopes to a constant map. We may assume that $f$ is simplicial. Let $S$ be the set of images of the vertices in $S^k$ and $\rho\colon X\to Y$  an $S$-section. Then $\rho X$ is an isomorphic image of $X$ in $Y$. If we can homotope $f$ to land in $\rho X$, we proved $(n-1)$-connectedness. Call a simplex of $Y$ \emph{bad} if it lies in the complement of $\rho X$.  Let $\sigma$ be a simplex of $S^k$ with maximal dimension $q$ such that $f(\sigma)$ is a bad simplex of $Y$, say of dimension $p\le q$. By maximality, $f$ maps the link of $\sigma$ to $\rho X$. This implies that every simplex in $\Lk_Y f\sigma$ is the image of a simplex $\tau$ of $X$ under $\rho$. But $\rho \tau$ will be sent to $\pi \rho \tau = \tau$ in $X$, which is in $\Lk_X \pi f \sigma$ because $\rho$ is an $S$-section. In fact $\Lk_Y f\sigma$ is isomorphic to $\Lk_X \pi f \sigma$ because $\rho$ is an $S$-section.
This link is $(n-p-2)$-connected by assumption on $X$ and $\Lk_{S^k} \sigma \cong S^{k-q-1}$. As $k-q-1 \le n- p -2$, there exists a map $F\colon D^{k-q} \to \Lk_Y f\sigma$ extending $f|_{\Lk \sigma}$. By the (relative) simplicial approximation theorem, we can extend the simplicial structure of $\Lk_{S^k} \sigma$ to $D^{k-q}$ and assume that $F$ is simplicial. Now $H:= F * f|_{\sigma}\colon D^{k-q} * \sigma \to \clst{f\sigma}$ exists. The boundary of the ball $D^{k-q} * \sigma$ is $\partial D^{k-q} * \sigma \cup D^{k-q} *\partial \sigma$. Therefore $H$ defines a homotopy from $f|_{\clst \sigma} \colon \clst \sigma = \partial D^{k-q} * \sigma \to \clst{ f\sigma}$ to $F*f|_{\partial \sigma}\colon D^{k-q} * \partial \sigma \to \clst{f\sigma}$. This defines a new map $f'$ homotopic to $f$ with fewer maximal simplices whose images are bad. Note that the simiplical structure on $S^k$  outside $ \clst \sigma $ has not changed, however, the simplicial structure on $ \clst \sigma $ has changed from $\Lk_{S^k} \sigma  ~*  ~\sigma$ to $D^{k-q} ~ *  ~\partial \sigma$. After finitely many iterations no bad simplices remain which shows that $Y$ is $(n-1)$-connected.

Assume $\pi$ is simplexwise injective. Let $\sigma$ be a $p$-simplex in $Y$, then we need to prove that $\Lk_Y \sigma$ is $(n-p-2)$-connected.  Note that the link of a $p$-simplex in $X$ is $(n-p-2)$-connected. Now we consider the restriction of $\pi$ to $\Lk_Y \sigma$ which maps to $\Lk_X \pi \sigma$ because $\pi$ is simplexwise injective. Let $\rho\colon X \to Y$ be an $S$-section where $S$ is the set of vertices of $\sigma$. That means for all simplices $\tau$ in $X$
\[ \tau \in \Lk_X \pi \sigma  \iff \rho \tau \in \Lk_Y \sigma.\]
In conclusion $\pi \colon \Lk_Y \sigma  \to \Lk_X \pi \sigma$ is surjective. Now we want to show that $\pi\colon \Lk_Y \sigma\to \Lk_X \pi \sigma$ is a fin-retraction. Let $S$ be a finite set of vertices in $\Lk_Y \sigma$, let $\sigma'$ be a simplex whose vertices are in $S$, and let $\tau$ be a simplex in $\Lk_X \pi \sigma$. Then the following equivalences hold.
\[ \tau \subset \Lk_{\Lk_X \pi \sigma} \pi\sigma' \iff \tau \subset \Lk_X \pi( \sigma' * \sigma)  \iff \rho \tau \subset \Lk_Y \sigma' * \sigma \iff \rho \tau \subset \Lk_{\Lk_Y \sigma} \sigma'\]
This proves that $ \wCM(Y) \ge n$ if $\wCM(X) \ge n$. 
\end{proof}

Returning to $S_n$, let us define a map from $\pi\colon S_n\to \Delta^{n-1}$. By Lemma \ref{lem:hom(m,n)}, we know every vertex of $S_n$ is an injection from $\BN^k$ to $\BN^k \times [n]$ that is a translation on rays. Since there is exactly one $k$-dimensional ray in a finite partition of $\BN^k$ into rays, we can track to which copy of $\BN^k$ this ray is sent. If we assume the vertex set of $\Delta^{n-1}$ is $[n]$, this gives us a map on the vertices, which uniquely extends to a simplicial map. 

We want to prove that $\pi^{(n-2)}\colon S_n^{(n-2)}\to (\Delta^{n-1})^{(n-2)}$ restricted to the $(n-2)$-skeleton is a fin-retraction. The following lemma will help us to analyze the complex $S_n$.

\begin{lem}
Let $f_1,\dots, f_{p+1}\colon \BN^k \to \BN^k\times [n]$ be $p+1\le n-1$ vertices of $S_n$, then they form a $p$-simplex if and only if their images in $\BN^k\times [n]$ are disjoint.
\end{lem}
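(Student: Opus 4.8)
The plan is to unwind the definition of $S_n = S_n(0,1)$ and transport everything into the concrete description of the morphisms of $\CH$ furnished by Lemma \ref{lem:hom(m,n)}. By Definition \ref{defn:Sn(X,A)} with $A = 0$ and $X = 1$, the set $\{f_1,\dots,f_{p+1}\}$ spans a $p$-simplex precisely when there is a morphism $g\in\Hom_{\CH}(p+1,n)$ with $g\circ i_j = f_{j+1}$ for some ordering of the set, where $i_j = \imath_{X^{\oplus j}}\oplus \id_X \oplus \imath_{X^{\oplus p-j}}\colon 1 \to 1^{\oplus p+1}$ is the inclusion of the $(j+1)$-st factor. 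Since $p+1 \le n-1 < n$, Lemma \ref{lem:hom(m,n)} identifies $\Hom_{\CH}(p+1,n)$ with the set of injections $\BN^k \times [p+1] \to \BN^k \times [n]$ that are translations on the rays of a finite ray partition, and likewise identifies each vertex $f_j \in \Hom_{\CH}(1,n)$ with such an injection $\BN^k \to \BN^k \times [n]$. Under these identifications $i_j$ becomes the inclusion $\BN^k \to \BN^k \times [p+1]$, $x \mapsto (x, j+1)$, of the $(j+1)$-st copy with trivial translation, so the condition $g\circ i_j = f_{j+1}$ says exactly that $g$ restricts to $f_{j+1}$ on the $(j+1)$-st copy of $\BN^k$.

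For the forward implication I would simply note that if such a $g$ exists, then $g$ is an injection, so its restrictions to the distinct copies $\BN^k \times \{j\}$ have pairwise disjoint images; these images are precisely the $f_j(\BN^k)$.

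For the converse, assume the images $f_1(\BN^k),\dots,f_{p+1}(\BN^k)$ are pairwise disjoint and define $g\colon \BN^k \times [p+1] \to \BN^k \times [n]$ by $g(x,j) = f_j(x)$. Disjointness of the images together with injectivity of each $f_j$ makes $g$ injective, and by construction $g\circ i_j = f_{j+1}$ for all $j$. To see that $g$ lies in $\Hom_{\CH}(p+1,n)$, pick for each $j$ a finite ray partition $P_j$ of the $j$-th copy of $\BN^k$ on which $f_j$ is a translation on every ray; then the disjoint union $\bigsqcup_{j} P_j$ is a finite ray partition of $\BN^k \times [p+1]$ on which $g$ is ray-wise a translation. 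Hence $g$ witnesses that $\{f_1,\dots,f_{p+1}\}$ is a $p$-simplex.

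The only genuinely delicate point is the bookkeeping in the first paragraph: verifying that, under the identification of Lemma \ref{lem:hom(m,n)}, the categorical inclusions $i_j$ correspond to the naive copy inclusions with identity translation. This amounts to chasing through the construction of $\CH$ from $\GH$ in Theorem \ref{thm:hom cat construction}, together with the monoidal and symmetry structures fixed above; once it is in place, both directions of the equivalence are immediate.
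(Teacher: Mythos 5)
Your proof is correct and follows essentially the same route as the paper: glue the $f_j$ into a single map on $\BN^k\times[p+1]$, observe that disjointness of images plus injectivity of each $f_j$ is equivalent to injectivity of the glued map, and invoke Lemma \ref{lem:hom(m,n)} together with Definition \ref{defn:Sn(X,A)}. You simply spell out a few details the paper leaves implicit (the ray partition of the glued map and the identification of the $i_j$ with copy inclusions), which is fine.
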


\begin{proof}
By the universal property of the coproduct of sets (disjoint union) we can put the $f_i$ together to one map $f\colon \BN^k \times [p+1] \to \BN^k \times [n]$. Because the images of the maps are disjoint and every individual map is injective, $f$ is also injective. By Lemma  \ref{lem:hom(m,n)} $f\in \Hom_{\CH}(m,n)$, which proves the lemma by Definition \ref{defn:Sn(X,A)}.
\end{proof}

\begin{prop}
The restriction of $\pi^{(n-2)}$ to the $(n-2)$-skeleton maps surjectively to the $(n-2)$-skeleton of $\Delta^{n-1}$. Furthermore $\pi^{(n-2)}\colon S_n^{(n-2)} \to(\Delta^{n-1})^{(n-2)}$ is a fin-retraction. 
\end{prop}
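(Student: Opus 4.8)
The plan is to verify the two assertions separately, using the combinatorial description of $S_n$ provided by the two lemmas above. For surjectivity of $\pi^{(n-2)}$ onto the $(n-2)$-skeleton of $\Delta^{n-1}$, I would take any $(n-2)$-face $T\subset[n]$, say $T=\{i_0,\dots,i_{p}\}$ with $p\le n-2$, and construct a $p$-simplex of $S_n$ lying over it. For each $i_j\in T$, pick the inclusion $f_j\colon\BN^k\to\BN^k\times[n]$ sending $x$ to $(x,i_j)$; this is a translation on the single $k$-dimensional ray $\BN^k$, hence a vertex of $S_n$ by Lemma~\ref{lem:hom(m,n)}, and its $k$-dimensional ray is sent to copy $i_j$, so $\pi(f_j)=i_j$. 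The images of the $f_j$ lie in distinct copies of $\BN^k\times[n]$ and are therefore disjoint, so by the previous lemma the $\{f_0,\dots,f_p\}$ form a $p$-simplex of $S_n$, which maps onto $T$. (One should double-check the degenerate case $p\le n-2$ causes no problem, which it does not since we never use all $n$ copies.)

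For the fin-retraction claim, given a finite set $S$ of vertices of $S_n^{(n-2)}$, I need to produce an $S$-section $\rho\colon(\Delta^{n-1})^{(n-2)}\to S_n^{(n-2)}$. The natural choice is: for the vertex $i\in[n]$ of $\Delta^{n-1}$, let $\rho(i)$ be the standard inclusion $x\mapsto(x,i)$, exactly as above; this is a genuine section of $\pi$ on vertices, and extends simplicially precisely because of the previous lemma — the $\rho(i)$ for $i$ in a face $T$ have pairwise disjoint images (they live in distinct copies), so $\{\rho(i):i\in T\}$ spans a simplex whenever $|T|\le n-1$. The content is then the $S$-section equivalence: for a simplex $\sigma$ spanned by vertices in $S$ and a simplex $\tau$ of $\Delta^{n-1}$, one must show $\tau\subset\Lk_{\Delta^{n-1}}\pi\sigma$ iff $\rho\tau\subset\Lk_{S_n}\sigma$. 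Unwinding: $\rho\tau\subset\Lk_{S_n}\sigma$ means $\sigma\cup\rho\tau$ is a simplex of $S_n$, i.e.\ (by the lemma) the images of the vertices of $\sigma$ together with the images $\BN^k\times\{i\}$ of the vertices $i\in\tau$ are all pairwise disjoint; since each vertex of $\sigma\in S_n^{(n-2)}$ is an injection whose $k$-dimensional ray is sent to $\pi(\sigma)$, while the \emph{rest} of its image is a finite set, the only obstruction to disjointness from $\BN^k\times\{i\}$ is $i\in\pi\sigma$ — a finite set cannot meet all of $\BN^k\times\{i\}$, but the $k$-dimensional ray of a vertex over $i$ does. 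This forces the equivalence with $\tau\subset\Lk_{\Delta^{n-1}}\pi\sigma$, i.e.\ $\tau\cap\pi\sigma=\emptyset$.

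I expect the main obstacle to be the careful bookkeeping in the $S$-section equivalence, specifically the observation that distinguishes a $k$-dimensional ray (infinite, cofinite in its copy) from the lower-dimensional debris (a finite subset of $\BN^k\times[n]$) of each vertex's image. The forward direction ($\rho\tau\subset\Lk_{S_n}\sigma\Rightarrow\tau\subset\Lk_{\Delta^{n-1}}\pi\sigma$) is immediate since $\pi$ is simplicial. For the reverse, suppose $\tau\cap\pi\sigma=\emptyset$; one must check that the images of the vertices of $\sigma$ are disjoint from each $\BN^k\times\{i\}$, $i\in\tau$, \emph{and} from each other (the latter holds because $\sigma$ is already a simplex of $S_n$), and that the $\rho(i)$, $i\in\tau$, have disjoint images from each other (clear) — the subtle point being that a vertex of $\sigma$ sitting over some $j\notin\tau$ can still have finitely many points of its image landing in copy $i\in\tau$, so one does \emph{not} get literal disjointness of the full images with arbitrary vertices; this is exactly why the hypothesis $p+1\le n-1$, which keeps $\sigma$ and $\rho\tau$ within the range where the disjointness-iff-simplex lemma applies, and the restriction to the $(n-2)$-skeleton, are needed. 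Once this is pinned down, the fin-retraction property follows, and combined with Proposition~\ref{prop:f-join} and the fact that $\wCM(\Delta^{n-1})\ge n$ (a simplex and all its links are contractible), this will give the desired connectivity of $S_n$.
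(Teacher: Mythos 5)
The surjectivity half of your argument is fine and matches the paper's in substance. The gap is in the fin-retraction half: you take $\rho(i)$ to be the standard inclusion $x\mapsto(x,i)$, \emph{independently of $S$}, and this is not an $S$-section. A vertex $f$ of $S_n$ with $\pi(f)=j$ is a translation on each ray of a finite ray-partition of $\BN^k$; only the unique $k$-dimensional ray is forced into copy $j$, while the lower-dimensional rays may be translated into arbitrary copies, so $\im f$ can meet $\BN^k\times\{i\}$ for $i\ne j$. Since $\im\rho(i)$ is \emph{all} of $\BN^k\times\{i\}$, the disjointness lemma then says $\{f,\rho(i)\}$ is not a simplex of $S_n$ even though $\{i\}\subset\Lk_{\Delta^{n-1}}\{j\}$, so the implication $\tau\subset\Lk_X\pi\sigma\Rightarrow\rho\tau\subset\Lk_Y\sigma$ fails. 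You do flag exactly this ``debris'' issue, but the claim that the restriction to the $(n-2)$-skeleton and the bound $p+1\le n-1$ resolve it is wrong: the lemma characterizes simplices by \emph{literal} disjointness of images, and a single point of overlap already destroys the simplex. (Two smaller errors in the same passage: for $k\ge2$ the debris is a finite union of rays of positive dimension, hence infinite; and ``a finite set cannot meet all of $\BN^k\times\{i\}$'' is beside the point, since any nonempty intersection is fatal.) This failure is precisely why the authors introduce fin-retractions rather than complete join complexes: no single section works for all of $S_n$ at once.

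The repair --- which is the paper's actual proof --- is to let $\rho$ depend on $S$. Construct $f_1,\dots,f_n$ inductively, taking $f_p$ to be the translation of the single $k$-dimensional ray $\BN^k$ into $\BN^k\times\{p\}$ by a vector $d\in\BZ^k$ chosen so large that $\im f_p$ is disjoint from the images of the previously chosen $f_j$ and of every vertex of $S$ not lying over $p$. This is possible because $S$ is finite and each such image meets $\BN^k\times\{p\}$ only in finitely many rays of dimension $<k$, each of which is avoided by pushing the appropriate coordinates of $d$ beyond its base point. With this choice the forward direction of the $S$-section equivalence holds by construction (all the relevant images are pairwise disjoint), and the reverse direction is your correct observation that the $k$-dimensional rays of joinable vertices must land in distinct copies.
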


\begin{proof}
$\pi^{(n-2)}$ is clearly surjective, because the identity map in $\Hom_{\CH}(n,n)$ defines an $(n-1)$-simplex that maps surjectively to $\Delta^{n-1}$. Thus the faces of this $(n-1)$-simplex map surjectively onto the $(n-2)$-skeleton of $\Delta^{n-1}$.

Let $S$ be a finite set of vertices of $S_n$. We want to inductively construct vertices $f_1, \dots, f_n \in S_n$ such that $\rho\colon (\Delta^{n-1})^{(n-2)} \to S_n^{(n-2)}$ mapping $i$ to $f_i$ gives a section of $\pi$, i.e.\ every $n-1$ vertices form an $(n-2)$-simplex in $S_n$ and $\pi(f_i) = i$. We will then also prove that $\rho$ is an $S$-section.

Assume we have all $f_i$ with $i< p$ already constructed. To construct $f_p$, we consider $\BN^k$ as a $k$-dimensional ray and send it to $\BN^k \times  \{p\}$ translating it far enough out (choosing $d\in \BZ^k$ large enough) that its image is disjoint from all images of the vertices in $\{f_i\mid i < p\}\cup S - \pi^{-1}(p)$. It immediately follows that the images $f_1(\BN^k), \dots, f_n(\BN^k)$ are pairwise disjoint. Thus $\rho$ gives a section.

Let $\sigma$ be a simplex in the span of $S$ and $\tau$ a simplex in the $(n-2)$-skeleton of $\Delta^{n-1}$. $\tau$ lies in the link of $\pi \sigma$ if and only the set of vertices of $\pi\sigma$ and  $\tau$ are disjoint and the union has at most cardinality $n-1$. If this is the case the vertices of $\sigma$ and $\rho\tau$, which are some $f_i$, form a simplex in $S_n^{(n-2)}$ by the previous lemma. Vice versa, if the vertices of $\sigma$ and $\rho\tau$ form a simplex in $S_n^{(n-2)}$ it can at most have $n-1$ vertices, and the $k$-dimensional ray of each vertex must be sent to a different copy of $\BN^k$. That means that $\pi$ sends the vertices to distinct vertices.
\end{proof}

\begin{cor} \label{cor:H3}
The simplicial complex $S_n$ is $(\frac{n-3}{2})$-connected.
\end{cor}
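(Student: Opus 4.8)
The plan is to feed the fin-retraction $\pi^{(n-2)}\colon S_n^{(n-2)}\to(\Delta^{n-1})^{(n-2)}$ produced by the preceding proposition into Proposition \ref{prop:f-join}. That proposition converts a fin-retraction onto a target $X$ with $\wCM(X)\ge m$ into the assertion that the source is $(m-1)$-connected; so the only genuinely new input needed is a lower bound for $\wCM\big((\Delta^{n-1})^{(n-2)}\big)$, after which a routine skeletal argument passes from $S_n^{(n-2)}$ back to $S_n$.

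First I would observe that $(\Delta^{n-1})^{(n-2)}$ is the boundary $\partial\Delta^{n-1}$, which is homeomorphic to $S^{n-2}$ and hence $(n-3)$-connected. For a $p$-simplex $\sigma$ of $(\Delta^{n-1})^{(n-2)}$ the link is the subcomplex on the $n-p-1$ vertices not in $\sigma$ consisting of the faces of dimension at most $n-p-3$, i.e.\ $\partial\Delta^{n-p-2}\cong S^{n-p-3}$, which is $(n-p-4)$-connected. Since $n-p-4=(n-2)-p-2$, this shows $\wCM\big((\Delta^{n-1})^{(n-2)}\big)\ge n-2$.

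Proposition \ref{prop:f-join} now applies directly and yields that $S_n^{(n-2)}$ is $(n-3)$-connected. The complex $S_n$ is obtained from $S_n^{(n-2)}$ by attaching cells of dimension $\ge n-1$, so it has the same homotopy groups in degrees $\le n-3$; hence $S_n$ is itself $(n-3)$-connected, and in particular $\big(\tfrac{n-3}{2}\big)$-connected. (For the small values of $n$ where $\tfrac{n-3}{2}$ exceeds $n-3$ this just amounts to $S_n$ being non-empty, which is clear since the restriction of the identity of $\BN^k\times[n]$ to a copy of $\BN^k$ is a vertex.)

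I do not expect a real obstacle here, since the substantive work — the construction of the fin-retraction and Proposition \ref{prop:f-join} — is already in hand. The only points requiring care are the exact connectivity indices in the weak Cohen--Macaulay estimate for $\partial\Delta^{n-1}$ and the (harmless) passage from the $(n-2)$-skeleton to all of $S_n$.
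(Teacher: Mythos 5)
Your proof is correct and follows essentially the same route as the paper: apply Proposition \ref{prop:f-join} to the fin-retraction $\pi^{(n-2)}\colon S_n^{(n-2)}\to(\Delta^{n-1})^{(n-2)}$ to get $(n-3)$-connectivity, then pass to $S_n$ and handle small $n$ via non-emptiness. You merely make explicit two steps the paper leaves implicit, namely the bound $\wCM\big((\Delta^{n-1})^{(n-2)}\big)\ge n-2$ and the passage from the $(n-2)$-skeleton to all of $S_n$, and both are carried out correctly.
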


\begin{proof}
From Proposition \ref{prop:f-join} we follow that $S_n^{(n-2)}$ is $(n-3)$-connected. For $n\ge 2$, $n-3\ge \left\lfloor\frac{n-3}2\right\rfloor$ implies it is in particular $(\frac{n-3}{2})$-connected. Clearly $S_n$ is always non-empty, which was left to show for $S_1$.
\end{proof}

The corollary proves that $S_{n+1}$ is $\frac{n-2}{2}$-connected, and hence \ref{cond:H3}  for  $S_n(1,1)$. Now by Theorem \ref{thm:hom stab}, this finishes the proof of Theorem A.

\section{Representation stability}\label{section:rep stab}

In this last section, we want to analyze the homology of the Houghton 
groups $\mathcal H_{k,n}$ and prove Theorem B that with constant noetherian 
coefficients their homology can be described as ``essentially'' finitely 
generated $\FI$-modules. We will shortly explain how we consider the 
homology of the Houghton groups as $\FI$-modules. Then describe the 
central stability theory of Putman and Sam, which they introduced in 
\cite{PS}. Making slight adjustments, we can use their Theorem 5.13 to 
prove that the homology of the Houghton groups are essentially finitely 
generated $\FI$-modules.

Let $\FI$ be the category of \textbf{f}inite sets and 
\textbf{i}njections. In fact this is the homogeneous category we get 
from the construction in Theorem \ref{thm:hom cat construction} starting 
with the symmetric monoidal groupoid of finite sets and bijections. (See 
also \cite[Section 5.1]{RWW}.) Let us fix a commutative noetherian ring 
$R$, then we call a functor from $\FI$ to the category of $R$-modules an 
$\FI$-module. Note that $\FI$ is equivalent to the full subcategory only 
defined on the objects $[n]$ for $n\ge 0$.\footnote{Here 
$[0]:=\emptyset$.} Likewise the category of $\FI$-modules is equivalent 
to the functor category from the mentioned subcategory to the category 
of $R$-modules. By abuse of notation we will from now on refer to this 
subcategory when we write $\FI$.

Let us define a functor $G$ from $\FI$ to the category of groups by 
assigning $n$ to the Houghton groups $G(n) = \mathcal H_{k,n}$. Given an 
injection $f\colon [m]\hookrightarrow[n]$, we define $f_*\colon \mathcal 
H_{k,m} \to \mathcal H_{k,n}$ be sending $g\in \mathcal H_{k,m}$ to the element in 
$\mathcal H_{k,n}$ that maps $(x,f(i))\mapsto (y,f(j))$ if $g(x,i) = (y,j)$ 
and leaves all other elements fixed. One easily checks that this 
assignment is functorial. Postcomposing with the group homology functor 
$H_i(-;R)$ will thus give an $\FI$-module, where $n$ is sent to 
$H_i(\mathcal H_{k,n};R)$.

Putman and Sam work with complemented categories in \cite{PS} which can be shown to be homogeneous categories. A \emph{complemented category} is a symmetric monoidal category $(\mathcal C, \oplus, 0)$ with the following properties.
\begin{enumerate}
\item Every morphism in $\mathcal C$ is a monomorphism.
\item $0$ is initial. 
\item $\Hom(A\oplus B, C) \to \Hom(A,C) \times \Hom(B,C)$ given by $f \mapsto (f\circ (\id_A \oplus \imath_B), f\circ (\imath_A\oplus \id_B))$ is injective.
\item Every subobject has a unique complement.
\end{enumerate}
A \emph{subobject} of an object $X$ is an equivalence class of monomorphisms to $X$. Monomorphisms $f\colon A\to X$ and $f'\colon A' \to X$ are equivalent if there is an isomorphism $\psi\colon A \to A'$ such that $f = f'\circ \psi$. A \emph{complement} of a suboject of an object $X$ is a subobject $g\colon B \to X$ if there is an isomorphism $\phi \colon A \oplus B \to X$ such that $f = \phi \circ ( \id_A \oplus \imath_B)$ and $g = \phi \circ (\imath_A \oplus \id_B)$.

Putman and Sam say a monoidal category has a \emph{generator} $X$ if all objects are isomorphic to $X^{\oplus n}$ for a unique $n\in\mathbb N$. We can then speak of the \emph{$X$-rank} of an object. Let $\mathcal B$ and $\mathcal C$ be complemented categories with generators $X$ and $Y$, respectively. Putman and Sam call a strong monoidal functor $\Psi\colon \mathcal B \to \mathcal C$ a \emph{highly surjective} functor if $\Psi(X) = Y$ and $\Psi_* \colon \Aut_{\mathcal B}(B) \to \Aut_{\mathcal C}(\Psi(B))$ is surjective for all objects $B \in \mathcal B$. In \cite[Section 5]{PS} it is proven that then there is a $\mathcal C$-module $\mathcal H_i(\Psi;R)$ with
\[ \mathcal H_i(\Psi; R)_{Y^n} = H_i( \ker(\Aut_{\mathcal B}(X^n) \to \Aut_{\mathcal C}(Y^n));R).\]

The homogeneous categories $\FI$ and $\CH$ which we defined in Section \ref{section:hom stab} are complemented categories. For $\FI$ this is stated in \cite[Example 1.10]{PS}. For $\CH$ it is not much harder to see. (a) is clear because all morphisms are injective maps. $\CH$ being a homogeneous category implies (b). A morphism in $\Hom_{\CH}(a\oplus b,n)$ is a map from $\BN^k \times[a]\, \amalg\, \BN^k \times [b] \to \BN^k \times [n] $. (c) follows from $\amalg$ being the coproduct of sets.
Finally for (d) we first observe that two maps $f\colon \BN^k \times [m] \to \BN^k \times [n]$ and $f' \colon \BN^k\times [m'] \to \BN^k \times [n]$ represent the same subobject of $n$ if and only if their image is in $\BN^k \times [n]$ is the same. Clearly the image is an invariant of a subobject. On the other hand if two such maps have the same image, $m=m'$ is the number of $k$-dimensional rays that fit into the image. If $f$ and $f'$ are translations on every ray of finite ray partitions of $\BN^k\times [m]$ then this gives two finite ray partitions of the image. The intersection yields a common refinement which is again a finite ray partition of the image. Thereby we find two finite ray partitions of $\BN^k \times [m]$ that by $f$ and $f'$, respectively, map to this refinement by translations on the rays. Thus $f^{-1} \circ f' \in \tilde{\mathcal H}_{k,m}$ and both maps represent the same subobject. The complement of a subobject $f\colon \BN^k \times [m]\to \BN^k\times[n]$ is then uniquely given by $\BN^k\times[n] - \im f$ which can be partitioned into finitely many rays.

Similar to the discussion in the introduction we can find a functor $\Psi\colon \CH \to \FI$ sending a morphism $f\colon \BN^k \times[m] \to \BN^k \times [n]$ to the injection given by the map where the $m$-many $k$-dimensional rays are sent to. This functor is in fact highly surjective, since the generator $1$ is mapped to the generator $1$ and $\tilde{\mathcal H}_{k,n} \to \mathfrak S_n$ is surjective.

Furthermore Putman and Sam's $\mathcal H_i(\Psi; R)$ for $\Psi\colon \CH \to \FI$ coincides with the $\FI$-module we have defined above. 

Let us introduce a notation to truncate modules $V$ over a complemented category $\mathcal C$ with generator $Y$. By 
$\trunc_{\ge k} V$, we mean the functor that sends all objects with $Y$-rank $n<k$ to zero and 
all other objects $A$ to $V_A$ as before. We call $V$ \emph{essentially finitely 
generated} if there is some $k\in \BN$ such that $\trunc_{\ge k} V$ is 
finitely generated. In \cite[Section 3]{PS} Putman and Sam introduce a complex of $\mathcal C$-modules
\[ \Sigma_* \colon \cdots \to \Sigma_2 V \to \Sigma_1 V \to V\]
where $(\Sigma_p V)_{Y^n}$ is given by $\Ind_{\Aut(Y^{n-p})}^{\Aut(Y^n)} V_{Y^{n-p}}$ and hence only depends on $V_{Y^{n-p}}$. Hence 
\[(\Sigma_p \trunc_{\ge k} V)_C = (\Sigma_p V)_C\]
for all objects $C$ with $Y$-rank $n\ge p+k$.
In the light of this observation, we can generalize their Lemma 3.6 and Theorem 3.7 to the following.

\begin{lem}\label{lem:3.6}
Let $\mathcal C$ be a complemented category with generator $Y$ and let $V$ be a $\mathcal C$-module over a ring $R$. Assume that all $V_C$ with $C$ having large enough $Y$-rank are finitely generated $R$-modules. Then $V$ is essentially finitely generated if and only if $d\colon (\Sigma_1 V)_C \to V_C$ is surjective for all $C$ with large enough $Y$-rank.
\end{lem}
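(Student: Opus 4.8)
The statement is a routine generalization of \cite[Lemma 3.6]{PS} that accommodates the ``essentially'' finitely generated setting and the weaker hypothesis that only the $V_C$ of large $Y$-rank are finitely generated $R$-modules. The plan is to reduce the statement to the original Putman--Sam lemma by passing to a truncation. First I would argue the easy direction: if $V$ is essentially finitely generated, pick $k$ with $W := \trunc_{\ge k} V$ finitely generated. Enlarging $k$ if necessary (using the hypothesis) we may assume in addition that all $W_C = V_C$ with $Y$-rank $\ge k$ are finitely generated $R$-modules. Then $W$ satisfies the hypotheses of \cite[Lemma 3.6]{PS} literally, so $d\colon (\Sigma_1 W)_C \to W_C$ is surjective for all $C$ of large enough $Y$-rank. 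By the observation recorded just before the lemma, $(\Sigma_1 W)_C = (\Sigma_1 V)_C$ and $W_C = V_C$ once $Y$-rank$(C) \ge k+1$, so $d\colon (\Sigma_1 V)_C \to V_C$ is surjective for all $C$ of large enough $Y$-rank.

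For the converse, suppose $d\colon (\Sigma_1 V)_C \to V_C$ is surjective whenever $Y\text{-rank}(C) \ge N$ for some $N$, and also (by hypothesis) choose $N$ large enough that every $V_C$ with $Y$-rank $\ge N$ is a finitely generated $R$-module. Set $W := \trunc_{\ge N} V$. Then for $C$ of $Y$-rank $\ge N+1$ the comparison $(\Sigma_1 W)_C = (\Sigma_1 V)_C \to V_C = W_C$ agrees with $d$ and is therefore surjective; for $C$ of $Y$-rank exactly $N$ we have $W_C = V_C$ but $(\Sigma_1 W)_C = 0$, so we do \emph{not} yet know surjectivity there, and for $C$ of $Y$-rank $< N$ both sides vanish. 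To fix the rank-$N$ level, truncate once more: $W' := \trunc_{\ge N+1} V$ satisfies the surjectivity of $d\colon (\Sigma_1 W')_C \to W'_C$ for \emph{all} objects $C$ (trivially below rank $N+1$, and by the above in rank $\ge N+1$), and all $W'_C$ are finitely generated $R$-modules. Now \cite[Lemma 3.6]{PS} applies directly to $W'$ and yields that $W'$ is finitely generated. Since $W' = \trunc_{\ge N+1} V$, this is exactly the assertion that $V$ is essentially finitely generated.

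\textbf{Main obstacle.} The only genuine subtlety is bookkeeping about which levels the hypotheses control and making sure the truncated module $W'$ honestly satisfies the hypotheses of the original Putman--Sam lemma \emph{on the nose} — in particular that a sufficiently high truncation kills exactly the finitely many problematic low-rank levels while leaving the generating behavior at high rank unchanged, which is guaranteed by the identity $(\Sigma_p \trunc_{\ge k} V)_C = (\Sigma_p V)_C$ for $Y\text{-rank}(C) \ge p+k$ noted before the statement. Everything else is formal: one should also remark that truncation $\trunc_{\ge k}$ of a $\mathcal C$-module is again a $\mathcal C$-module (all structure maps between low-rank objects, or from a low-rank to a high-rank object, can be set to zero consistently because morphisms of $\mathcal C$ never lower rank), and that ``$V$ essentially finitely generated'' is insensitive to the precise truncation threshold, so improving $k$ is harmless.
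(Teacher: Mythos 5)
Your overall strategy --- reduce to Putman--Sam's Lemma 3.6 by truncating and invoking the identity $(\Sigma_1 \trunc_{\ge k}V)_C=(\Sigma_1 V)_C$ in ranks $\ge k+1$ --- is exactly the reduction the paper intends (the paper offers nothing beyond this observation as its proof). However, your converse direction contains a step that fails as stated: you claim that $W'=\trunc_{\ge N+1}V$ has $d\colon(\Sigma_1 W')_C\to W'_C$ surjective for \emph{all} objects $C$, ``by the above in rank $\ge N+1$''. At rank exactly $N+1$ this is false in general: $(\Sigma_1 W')_{Y^{N+1}}=\Ind_{\Aut(Y^{N})}^{\Aut(Y^{N+1})}W'_{Y^{N}}=0$ because $W'_{Y^{N}}=0$, while $W'_{Y^{N+1}}=V_{Y^{N+1}}$ need not vanish; the comparison identity for $\trunc_{\ge N+1}$ only begins at rank $N+2$. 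So the defect you correctly spotted for $\trunc_{\ge N}V$ at its bottom rank is not cured by truncating again --- it simply moves up to the new bottom rank, and no truncation can give surjectivity at every object. The repair is that none is needed: Putman--Sam's Lemma 3.6, like the statement being proved, only requires surjectivity of $d$ in all sufficiently large ranks (an ``iff'' demanding surjectivity at every rank would be false, since a finitely generated module need not be generated in rank $0$). Apply their lemma directly to $W=\trunc_{\ge N}V$: all $W_C$ are finitely generated $R$-modules (zero below rank $N$, finitely generated above by hypothesis), and $d\colon(\Sigma_1 W)_C\to W_C$ is surjective in ranks $\ge N+1$; this already yields $W$ finitely generated, hence $V$ essentially finitely generated. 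Delete the second truncation.

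A smaller point in the forward direction: ``improving $k$ is harmless'' is not automatic. $\trunc_{\ge k'}V$ is a submodule of $\trunc_{\ge k}V$, and since noetherianity of the module category is \emph{not} assumed in this lemma, submodules of finitely generated modules need not be finitely generated. The claim is true here, but it needs a one-line argument: any morphism $C\to Y^{n}$ with $Y$-rank of $C$ less than $k'\le n$ factors through $Y^{k'}$ (split off a complement of the corresponding subobject), so $\trunc_{\ge k'}V$ is generated by those generators of $\trunc_{\ge k}V$ having rank $\ge k'$ together with a finite $R$-generating set of $V_{Y^{k'}}$, which exists once $k'$ is large enough by the hypothesis on the levels. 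With that remark added and the converse corrected as above, your reduction is sound and coincides with the paper's intended argument.
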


\begin{thm}\label{thm:3.7}
Let $\mathcal C$ be a complemented category with generator $Y$. Assume that the category of $\mathcal C$-modules is noetherian, and let $V$ be an essentially finitely generated $\mathcal C$-module. Fix some $q\ge 1$. For all $C$ with large enough $Y$-rank the chain complex
\[ (\Sigma_q V)_C \to (\Sigma_{q-1} V)_C \to \dots \to (\Sigma_1 V)_C \to V_C \to 0\]
is exact. 
\end{thm}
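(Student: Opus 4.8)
The plan is to mimic the proof of \cite[Theorem 3.7]{PS}, reducing the statement about essentially finitely generated modules to the genuinely finitely generated case via the truncation operator $\trunc_{\ge k}$. First I would fix an essentially finitely generated $\mathcal C$-module $V$ and choose $k$ large enough that $W := \trunc_{\ge k} V$ is finitely generated. The key compatibility is the observation already recorded above the statement: $(\Sigma_p W)_C = (\Sigma_p V)_C$ whenever the $Y$-rank of $C$ is at least $p+k$. Consequently, for any fixed $q$, the two chain complexes $(\Sigma_\bullet V)_C$ and $(\Sigma_\bullet W)_C$ in degrees $0,1,\dots,q$ agree as soon as the $Y$-rank of $C$ is at least $q+k$. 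Thus it suffices to prove the exactness statement for the \emph{finitely generated} module $W$, which is exactly the content of \cite[Theorem 3.7]{PS} (whose proof only uses that the module is finitely generated and that the category of $\mathcal C$-modules is noetherian).

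In slightly more detail, I would invoke \cite[Theorem 3.7]{PS} applied to $W$: since $W$ is a finitely generated $\mathcal C$-module and the category of $\mathcal C$-modules is noetherian, for each $q\ge 1$ there is a bound $N_q$ such that the complex
\[ (\Sigma_q W)_C \to (\Sigma_{q-1} W)_C \to \dots \to (\Sigma_1 W)_C \to W_C \to 0 \]
is exact for all $C$ with $Y$-rank at least $N_q$. Now for $C$ of $Y$-rank at least $\max(N_q, q+k)$, every term $(\Sigma_p W)_C$ with $0\le p\le q$ equals $(\Sigma_p V)_C$, and $W_C = V_C$, and moreover the differentials agree because they are induced by the same structure maps of the complemented category (the $\Sigma_\bullet$ construction is functorial and the truncation is a quotient that is an isomorphism in this range of ranks). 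Hence the complex in the statement is exact for all such $C$, which is what we wanted.

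The main obstacle — really the only point requiring care — is checking that the truncation $W = \trunc_{\ge k}V$ is indeed finitely generated as a $\mathcal C$-module, and that it is legitimate to feed it into \cite[Theorem 3.7]{PS}. This is precisely the role of Lemma \ref{lem:3.6}: essential finite generation of $V$ means some truncation is finitely generated, and enlarging $k$ only keeps it finitely generated (a truncation of a finitely generated module is finitely generated, since $\Sigma_1$-surjectivity in large rank persists). One should also verify that the differentials in $\Sigma_\bullet W$ and $\Sigma_\bullet V$ literally coincide in the relevant range and not merely that the terms are abstractly isomorphic; this is immediate from the explicit formula $(\Sigma_p V)_{Y^n} = \Ind_{\Aut(Y^{n-p})}^{\Aut(Y^n)} V_{Y^{n-p}}$ and the fact that the face maps are built from the inclusions $Y^{n-p}\to Y^{n-p+1}$ together with the module structure maps, all of which only see $V$ in $Y$-rank $\ge n-p \ge k$. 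Beyond this bookkeeping, the result is a formal consequence of the finitely generated case together with the noetherian hypothesis. \qed
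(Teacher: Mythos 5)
Your proposal is correct and is essentially the paper's own (largely implicit) argument: the paper records exactly the observation $(\Sigma_p \trunc_{\ge k}V)_C=(\Sigma_p V)_C$ for $C$ of $Y$-rank $\ge p+k$ and then reduces to Putman--Sam's Theorem 3.7 applied to the finitely generated truncation, just as you do. One small inaccuracy that does not affect the argument: $\trunc_{\ge k}V$ is naturally a sub-$\mathcal C$-module of $V$ (identity in ranks $\ge k$, zero below), not a quotient, but all that is used is the agreement of terms and differentials in the stated range of ranks.
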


We need one more piece of notation. Putman and Sam define a semisimplicial set $\mathfrak I_C$ for a complemented category $\mathcal C$ with generator $Y$ by
\[ \mathfrak I_{C,p} = \Hom_{\mathcal C}(Y^{p+1}, C).\]
For $C = Y^n$ this is the same semisimplicial set $W_n(0,Y)$ defined by Randal-Williams and Wahl in \cite[2.1]{RWW}.

Let us formulate a slight variation of Theorem 5.13 from \cite{PS} that we will use to prove  that for every $i\ge 0$ the $\FI$-module given by $H_i(\mathcal H_{k,n}; R)$ is essentially finitely generated for every noetherian ring $R$.

 \begin{thm}\label{thm:5.13}
Let $\mathcal B$ and $\mathcal C$ be complemented categories with generators $X$ and $Y$, respectively. Let $\Psi\colon \mathcal B \to \mathcal C$ be a highly surjective functor. Fix a noetherian ring $R$ and assume the following conditions.
\begin{enumerate}
\item\label{item:FI-mod noetherian} The category of $\mathcal C$-modules is 
noetherian.
\item\label{item:H_i(G_n) fg} For all $i\ge 0$ the $R$-module 
$\mathcal H_i(\Psi;R)_{C}$ is a finitely generated $R$-module for all $C$ with large 
enough $Y$-rank.
\item\label{item:H3} Fix $q\ge 0$. Then $\mathfrak I_B$ is $q$-acyclic for all objects $B\in \mathcal B$ with large enough $X$-rank. 
\end{enumerate}
Then $\mathcal H_i(\Psi;R)$ is an essentially finitely generated $\mathcal C$-module for all $i\ge 0$.
\end{thm}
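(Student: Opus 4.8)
The plan is to mimic Putman and Sam's proof of their Theorem~5.13 (see \cite[\S5]{PS}), feeding in our Lemma~\ref{lem:3.6} and Theorem~\ref{thm:3.7} in place of their finitely-generated analogues and relaxing ``finitely generated'' to ``essentially finitely generated'' throughout. Write $K_B := \ker\bigl(\Aut_{\mathcal B}(B)\to\Aut_{\mathcal C}(\Psi B)\bigr)$, so that $\mathcal H_i(\Psi;R)_{\Psi B} = H_i(K_B;R)$, set $K_n := K_{X^n}$, and abbreviate $\mathcal H_\bullet := \mathcal H_\bullet(\Psi;R)$. I would argue by induction on $i$. The base case $i=0$ is immediate: $\mathcal H_0$ is the constant module $\underline R \cong R[\Hom_{\mathcal C}(0,-)]$, which is representable and hence finitely generated. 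Throughout, ``for large rank'' abbreviates ``for all objects whose rank exceeds a bound depending only on $i$''; since at each step only finitely many such bounds are combined, this is harmless.

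For the inductive step, by Lemma~\ref{lem:3.6} and assumption~(\ref{item:H_i(G_n) fg}) it suffices to show that $d\colon(\Sigma_1\mathcal H_i)_{Y^n}\to(\mathcal H_i)_{Y^n}$ is surjective for all $Y^n$ of large rank. To get at $d$, I would let $K_n$ act on the semisimplicial set $\mathfrak I_{X^n}$, whose $p$-simplices are $\Hom_{\mathcal B}(X^{p+1},X^n)$. Since $\mathcal B$ is complemented with generator $X$, the group $\Aut_{\mathcal B}(X^n)$ acts transitively on the $p$-simplices with stabilizer $\Aut_{\mathcal B}(X^{n-p-1})$ (the automorphisms of the complement of the image); using that $\Psi$ is highly surjective, one checks that the $K_n$-orbits of $p$-simplices are naturally indexed by $\Hom_{\mathcal C}(Y^{p+1},Y^n)$ and that the $K_n$-stabilizer of a $p$-simplex is isomorphic to $K_{n-p-1}$. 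The spectral sequence of the skeletal filtration of the Borel construction $EK_n\times_{K_n}\mathfrak I_{X^n}$ then takes the shape
\[ E^1_{p,q} \;=\; \bigoplus_{\Hom_{\mathcal C}(Y^{p+1},Y^n)} H_q(K_{n-p-1};R)\;\cong\;(\Sigma_{p+1}\mathcal H_q)_{Y^n},\]
with $d^1$ the alternating sum of face maps, i.e.\ exactly the differential of the Putman--Sam complex $\Sigma_\bullet\mathcal H_q$ evaluated at $Y^n$. By assumption~(\ref{item:H3}), applied with $q=i$, the complex $\mathfrak I_{X^n}$ is $i$-acyclic for $X^n$ of large rank, so this spectral sequence computes $H_j(K_n;R)$ for $j\le i$.

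Now I would run the induction proper. For $q<i$ the module $\mathcal H_q$ is essentially finitely generated by the inductive hypothesis, so Theorem~\ref{thm:3.7} (using assumption~(\ref{item:FI-mod noetherian})) says the augmented complex $\cdots\to(\Sigma_2\mathcal H_q)_{Y^n}\to(\Sigma_1\mathcal H_q)_{Y^n}\to(\mathcal H_q)_{Y^n}\to0$ is exact for all $Y^n$ of large rank; in the range that matters this means $E^2_{p,q}=0$ for $p\ge1$ and $E^2_{0,q}\cong(\mathcal H_q)_{Y^n}$. A short diagram chase now shows that no differential enters or leaves $E_{0,i}$ and that $E^\infty_{p,i-p}=0$ for $1\le p\le i$, so that
\[ H_i(K_n;R)=(\mathcal H_i)_{Y^n}=E^\infty_{0,i}=\Coker\bigl((\Sigma_2\mathcal H_i)_{Y^n}\to(\Sigma_1\mathcal H_i)_{Y^n}\bigr),\]
and the resulting edge homomorphism is precisely $d$. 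Hence $d$ is surjective for $Y^n$ of large rank, and Lemma~\ref{lem:3.6} closes the induction.

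I expect the main obstacle to be the bookkeeping of the second paragraph: showing that the $K_n$-orbit/stabilizer structure on $\mathfrak I_{X^n}$ is governed by the $\mathcal C$-side combinatorics (this is the step that genuinely uses high surjectivity of $\Psi$), and then matching the spectral sequence both on its $E^1$-page with the complexes $\Sigma_\bullet$ and on its edge homomorphism with the map $d$ of Lemma~\ref{lem:3.6}, carefully enough that Theorem~\ref{thm:3.7} can be applied row by row within the stated range. None of this deviates from \cite[\S5]{PS}; the one substantive change is that each appeal to exactness or to finite generation now holds only in large rank, which is exactly why the conclusion weakens to \emph{essential} finite generation.
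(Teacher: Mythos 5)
Your proposal is correct and follows essentially the same route as the paper: the paper's proof consists precisely of running Putman and Sam's argument for their Theorem 5.13 verbatim, substituting Lemma \ref{lem:3.6} and Theorem \ref{thm:3.7} for their Lemma 3.6 and Theorem 3.7 and weakening ``finitely generated'' to ``essentially finitely generated'' in the large-rank range. Your expanded sketch of the underlying Putman--Sam machinery (the $K_n$-equivariant spectral sequence for $\mathfrak I_{X^n}$, identification of $E^1$ rows with $\Sigma_\bullet\mathcal H_q$, and the edge map giving surjectivity of $d$) is consistent with their proof and with how the paper intends it to be reused.
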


In Putman and Sam's proof of their Theorem 5.13 one can now replace their Lemma 3.6 and Theorem 3.7 by our Lemma \ref{lem:3.6} and Theorem \ref{thm:3.7}, and the word finitely generated by essentially finitely generated to get a proof of Theorem \ref{thm:5.13}.

Theorem B is an application of Theorem \ref{thm:5.13} if we set $\mathcal B = \CH$, $\mathcal C=\FI$, and $\Psi \colon \CH \to \FI$ as above. 
Let us check the conditions. The first condition was proved by Church, Ellenberg, Farb, and Nagpal in 
\cite[Theorem A]{CEFN}. The second condition can be directly derived 
from $\mathcal H_{k,n}$ being $FP_{n-1}$, i.e.\ the trivial $\BZ\mathcal 
H_{k,n}$-module $\BZ$ admits a projective resolution which is finitely 
generated in dimensions $\le n-1$. This property is proved in 
\cite[5.1]{B} for the original Houghton groups and in \cite[Theorem B]{BH} for $k\ge 2$. The last condition is Corollary \ref{cor:H3} together with Remark \ref{rem:semisimplicial}.

\bibliographystyle{amsplain}

\end{document}